\newcolumntype{Y}{>{\centering\arraybackslash}X}
\numberwithin{equation}{section}
\newtheorem{theorem}{Theorem}[section]
\newtheorem{proposition}[theorem]{Proposition}
\newtheorem{remark}[theorem]{Remark}
\newtheorem{definition}[theorem]{Definition}
\DeclareRobustCommand\onedot{\futurelet\@let@token\@onedot}
\def\@onedot{\ifx\@let@token.\else.\null\fi\xspace}
\def\eg{e.g\onedot} 
\def\ie{i.e\onedot} 
\def\cf{cf\onedot}
\def\etal{et al\onedot}
\DeclareMathAlphabet\mathbfcal{OMS}{cmsy}{b}{n}
\newcommand{\red}[1] {{\color{red}{{#1}}}}
\newcommand{\notinclude}[1]{}
    \let\todoavailable1	
\providecommand\MR{} 
    \newcommand{\inlinetodo}[2][]{%
        \@todo[caption={}, inline, #1]{#2}%
    } 
    \newcommand{\missing}[2][]{ \@todo[color=red!40, #1]{Missing: #2} }
    \newcommand{\CS}[2][]{\@todo[inline,color=yellow, #1]{Christoph: #2}}
    \newcommand{\JS}[2][]{\@todo[inline,color=yellow, #1]{Josua: #2}}
    \renewcommand{\MR}[2][]{\@todo[inline,color=yellow, #1]{Martin: #2}}
    \newcommand{\missing}[1]{ {\red{missing: [} #1 \red{]!!!}} }
    \newcommand{\CS}[2][]{\red{Christoph: [} #2 \red{].}}
    \renewcommand{\MR}[2][]{\red{Martin: [} #2 \red{].}}
    \newcommand{\JS}[2][]{\red{Josua: [} #2 \red{].}}
    \newcommand{\inlinetodo}[2][]{{\red{TODO: {#2} }\hfill\\}}%
\newcommand{\R}{\mathbb{R}}
\newcommand{\N}{\mathbb{N}}
\newcommand{\Z}{\mathbb{Z}}
\newcommand{\Id}{\mathrm{Id}}
\DeclareMathOperator*{\argmin}{arg\,min}
\renewcommand{\d}{\,\mathrm{d}}
\newcommand{\dist}{\mathrm{dist}}
\newcommand{\sdist}{\mathrm{sdist}}
\newcommand{\Hd}{\mathcal{H}}
\renewcommand{\laplace}{\Delta}
\newcommand{\laplace}{\Delta}
\newcommand{\eps}{\varepsilon}
\newcommand{\mc}{{\mathbf{h}}}
\newcommand{\param}{x}
\newcommand{\Per}{\mathcal{P}}
\newcommand{\dwell}{\Psi}
\newcommand{\kernel}{\kappa}
\newcommand{\NNkernel}{K}
\title{A hybrid minimizing movement and  neural network approach to Willmore flow}
\author{
	Martin Rumpf \thanks{Institute for Numerical Simulation, University of Bonn, Endenicher Allee 60, 53115 Bonn, Germany
				(\href{mailto:martin.rumpf@uni-bonn.de}{martin.rumpf@uni-bonn.de}, \href{mailto:christoph.smoch@ins.uni-bonn.de}{christoph.smoch@ins.uni-bonn.de} ).}
	\and 
	Josua Sassen\thanks{Centre Borelli, ENS Paris-Saclay, 4 Avenue des Sciences, 91190 Gif-sur-Yvette, France
				(\href{mailto:josua.sassen@ens-paris-saclay.fr}{josua.sassen@ens-paris-saclay.fr}).}
	\and
	Christoph Smoch\footnotemark[1]
%
}
\begin{document}
\maketitle
\begin{abstract}
	We present a hybrid method combining a minimizing movement scheme with neural operators for the simulation of phase field-based Willmore flow.
	The minimizing movement component is based on a standard optimization problem on a regular grid, whereas the functional to be minimized involves a neural approximation of mean curvature flow proposed by Bretin \etal \cite{BrDeMa22}.
	Numerical experiments confirm stability for large time step sizes, consistency, and significantly reduced computational cost compared to a traditional finite element method. 
	Moreover, applications demonstrate its effectiveness in surface fairing and reconstructing of damaged shapes. 
	Thus, our approach offers a robust and efficient tool for geometry processing.
\end{abstract}

\maketitle

\section{Introduction}
\label{sec:introduction}

Willmore flow is the $L^2$-gradient flow of the Willmore energy, which is defined as the surface integral over the squared mean curvature.  
For closed surfaces of genus zero, the Willmore energy allows to quantify how much a surface deviates from being a perfect sphere, with a round sphere having minimal Willmore energy.
Physically, the Willmore energy reflects an approximation of the stored energy in a thin elastic shell with a planar physical reference configuration \cite{Ci00}.
It is also used to model the behavior of cell membranes, which have the tendency to minimize their bending.
To this end, an extension of the Willmore energy, the Helfrich model, is used to describe elastic cell membranes in biology \cite{CaArMa14}. 
Furthermore, in computer graphics and geometry processing, Willmore flow is used for surface smoothing \cite{CrPiSc13}, and surface restoration \cite{ClDiDz04,LeCh23}. 
This motivates the study of Willmore flow, and in particular, the development of numerical schemes.

However, as we will discuss below, Willmore flow is described by a fourth-order PDE, which makes it challenging to devise schemes that stably allow for large time steps and are computationally efficient.
We will show that our proposed hybrid scheme is stable for large time steps and indeed leads to an efficient scheme for Willmore flow in the case of a phase field formulation, which is in particular practical for applications with implicitly described geometries.
To compute Willmore flow, we will consider a minimizing movement time discretization.
Therein, we combine a discretization of the minimizing movement scheme on a regular grid with a neural network approximation of the mean curvature arising in the Willmore energy.

This paper deals with phase field models approximating hypersurfaces \(\Gamma\) in the computational domain \(\Omega=(0,1)^d\) and their evolution by Willmore flow.
To motivate the hybrid method, we first recall the parametric formulation of Willmore flow.
We denote by \(\param \colon \Gamma \to \Gamma\) the identity restricted to the surface \(\Gamma\).
Then the \emph{Willmore energy} of $\Gamma$ is given by
\begin{equation}
w[\param] \coloneqq \frac{1}{2} \int_\Gamma\mc^2(\param) \d \Hd^{d-1},
\end{equation}
where \(\mc\) is the mean curvature, which we take to be the sum of the principal curvatures.
The \emph{Willmore flow} for parametrizations is then the \(L^2\)-gradient flow of this energy, \ie the evolution \(\param \colon [0, T] \times \Gamma \to \Omega\) that fulfills 
\begin{equation}
(\partial_t \param, \vartheta)_{L^2(\Gamma)} = -\partial_\param w[\param](\vartheta) 
\end{equation}
for all test functions \(\vartheta \in C^{\infty}(\Gamma, \R^d)\), where \((\cdot,\cdot)_{L^2(\Gamma)}\) denotes the \(L^2\)-scalar product on the hypersurface \(\Gamma\) and \(\partial_\param w[\param](\vartheta) \) the variation of the Willmore energy in direction \(\vartheta\).

Willmore surfaces, \ie minimizers of the Willmore energy, and Willmore flow have been the subject of intense theoretical study.
Simonett proved in \cite{Si01} the existence of a unique and locally smooth solution of Willmore flow for sufficiently smooth initial surfaces as well as exponential convergence to a sphere for initial surfaces close to a sphere.
Similarly, Dall'Acqua \etal~\cite{DaMuSc23} proved that if the initial datum is a torus of revolution with Willmore energy less than \(16 \pi\) then the Willmore flow converges to the Clifford Torus.
Kuwert and Sch\"atzle treated long-time existence and regularity of solutions in co-dimension one in \cite{KuSc01,KuSc02c,KuSc04} and Rivi\`ere~\cite{Ri08} extended these results to arbitrary co-dimension.
In 2014, Marques and Neves~\cite{MaNe14} were able to prove the famous Willmore conjecture, \ie that for every smooth immersed torus in \(\R^3\) the Willmore energy is lower bounded by \(4\pi^2\).

Similarly, also the numerical treatment of Willmore flow for surfaces has garnered significant attention.
Rusu~\cite{Ru01} introduced a semi-implicit finite element scheme for the computation of the parametric Willmore flow of surfaces, which was applied by Clarenz \etal \cite{ClDiDz04} to surface restoration problems.
Droske and Rumpf \cite{DrRu04} introduced a level set formulation for Willmore flow and a corresponding numerical scheme. 
Deckelnick and Dziuk provided in \cite{DeDz06} a priori error estimates for a spatially discretized but time-continuous finite element scheme on two-dimensional graphs. 
Alternative finite element schemes for parametric Willmore flow were introduced by Barrett \etal \cite{BaGaNu07} and Dziuk \cite{Dz08}.
In contrast, Bobenko and Schr\"oder \cite{BoSc05} introduced a discrete Willmore energy and flow based on discrete differential geometry.
Concerning phase field models, Du \etal introduced and analyzed a discrete semi-implicit scheme for Willmore flow in 
\cite{DuLiWa04, 
	DuWa07}.
In \cite{BrMaOu15}, Bretin \etal investigated flows for various diffuse approximations of the Willmore energy and its relaxations and introduced corresponding numerical schemes.
If the goal is to minimize Willmore energy, one can also consider gradient flows with respect to other metrics. 
For example, Schumacher \cite{Sc17} analyzed \(H^2\)-gradient flows for the Willmore energy with numerical experiments on triangle meshes. 
Soliman \etal \cite{SoChDi21} extended this idea on triangle meshes to incorporate further constraints -- most notably on the conformal class of the surface. 
However, in this paper, the goal is not to minimize the Willmore \emph{energy} but to efficiently simulate Willmore \emph{flow}, \ie the \(L^2\)-gradient flow of the Willmore energy.

To this end,  we will consider a \emph{variational time-discretization} of Willmore flow based on the minimizing movements paradigm \cite{De93,AmGiSa08}. 
In case of parametric Willmore flow and time step size \(\tau > 0\), given a parametrization \(\param^k \colon \Gamma \to \Omega\) approximating the evolution at time \(k\tau\) , the next time step \(\param^{k+1}\) is given as the minimizer of 
\begin{equation}
\label{eq:variational_willmore_step}
e[x^k, x] \coloneqq \lVert x - x^k \rVert_{L^2(\Gamma^k)}^2 + 2\tau\, w[\param].
\end{equation}
To obtain a fully implicit, variational formulation of Willmore flow that is (experimentally) unconditionally stable and allows for large step sizes, Balzani and Rumpf \cite{BaRu13} proposed to approximate the Willmore functional \(w[\param]\) in \autoref{eq:variational_willmore_step} using an approximate mean curvature.
To this end, one denotes by \(y_{\tilde\tau}[\param] \colon \Gamma \to \Omega\) the solution of one discrete time step of mean curvature flow (MCF) with time step size \(\tilde\tau\) starting from the initial parametrization \(\param\) corresponding to the surface $\Gamma$. 
By definition, the mean curvature \(\mc\) is the normal velocity of mean curvature flow. 
Hence, \(\frac12 \int_\Gamma \frac{\vert y_{\tilde \tau}[x]-x\vert^2}{\tilde \tau^2} \d \Hd^{d-1}\) constitutes a difference quotient approximation of the Willmore energy \(w[\param]\) and a minimizing movement functional is given as
\begin{equation}
\label{eq:nested_willmore_step}
e[x^k, x] \coloneqq \Vert x - x^k \Vert_{L^2(\Gamma^k)}^2 + \tau \int_\Gamma \frac{\vert y_{\tilde \tau}[x]-x\vert^2}{\tilde \tau^2} \d \Hd^{d-1}.
\end{equation}
As a discrete solution for mean curvature motion, one might consider the time step of a semi-discrete backward Euler scheme with time step $\tilde \tau$ as proposed by Dziuk \cite{Dz91}. 
Thus, minimizing the energy  $e[x^k, x]$ in $x$ amounts to solving a nested time discretization with time-discrete mean curvature motion as the inner and the actual time-discrete Willmore flow as the outer problem,
It can also be understood as a PDE-constrained optimization problem, where one optimizes the energy \autoref{eq:nested_willmore_step} subject to the constraint that $y_{\tilde \tau}[x]$ is the solution of the linear discrete semi-implicit backward Euler scheme for mean curvature motion. 

In this work, we pick up the approach proposed by Franken \etal \cite{FrRuWi11} and focus on the corresponding scheme for phase fields.
In \cite{FrRuWi11}, Franken \etal observed increased robustness of their nested scheme compared to a discrete semi-implicit approximation of phase field Willmore flow, based on an energy of the form
\begin{equation}
\label{eq:PfApproximationWillmoreEnergy}
\tfrac{1}{2\eps} \int_\Omega (-\eps \laplace u + \tfrac{1}{2\eps} \Psi'(u))^2 \d x
\end{equation}
as proposed in \cite{DuWa07}.
Our core idea is to construct a hybrid scheme where the inner problem is solved using a neural network, 
whereas the outer problem remains a classical minimizing movement optimization scheme. 
For the inner problem, we will build on recent work by Bretin \etal \cite{BrDeMa22} on neural operators for time discrete mean curvature flow in a phase field formulation.
These neural operators are efficient to evaluate, straightforward to differentiate, and convergent under spatial refinement in numerical experiments. 
Using them in the variational time-discretization, we obtain a numerical scheme 
for the phase field approximation of Willmore flow that, similar to the original nested scheme, 
remains stable for large time steps while being computationally more efficient.

By adopting this hybrid strategy, we take a step towards a fully neural treatment of Willmore flow,
where the ultimate goal is to learn the solution operator itself. 
Indeed, considering a network that approximates a minimizing movement scheme for the phase field approximation of the Willmore energy \autoref{eq:PfApproximationWillmoreEnergy} would be a promising alternative.
However, it requires sufficient training data for a suitable large set of initial conditions, which has to be computed in the training phase.
Given the usual computing times for Willmore flow, this is surely a major computational challenge. Furthermore, the selection of a suitable class of initial conditions is still open.
The advantage of our approach is that the nested scheme by Franken \etal \cite{FrRuWi11} allows for
an approximation of the mean curvature based on a neural operator for the approximation of MCF. 
This only requires training data for a corresponding phase field approximation of MCF. 
Such training data is easy and computationally cheap to generate, as discussed by Bretin \etal \cite{BrDeMa22}.
Indeed, they showed that training on hypersphere evolutions is sufficient.

Within this context, the integration of a learned mean curvature flow operator into the nested framework provides a practical and principled intermediate step: it leverages the efficiency of neural operators while retaining the well-established structure of the nested algorithm. 
We show that this integration indeed leads to a viable hybrid scheme.

The remainder of the paper is organized as follows. 
In \autoref{sec:method}, we recapitulate the adaptation of the variational time-discretization to phase fields from \cite{FrRuWi11} and introduce a neural operator approximation of mean curvature flow for phase fields inspired by \cite{BrDeMa22}. 
We combine both to obtain the spatially discrete hybrid scheme for Willmore flow in \autoref{sec:discretization}.
In \autoref{sec:experiments}, we first experimentally validate the convergence properties of the neural network-based discrete mean curvature flow to then underpin a corresponding validation of our hybrid Willmore flow scheme.  
Furthermore, we use our hybrid scheme to compute the evolution for different interesting initial curves in 2D and surfaces in 3D. 
Afterwards, in \autoref{sec:applications}, we apply the scheme for curve and surface restoration. 
Finally, in \autoref{sec:conclusions}, we briefly draw conclusions.

\section{Synthesis of the time-discrete Willmore flow}
\label{sec:method}
Our scheme has two essential ingredients: 
a {\it phase field based minimizing movement scheme for Willmore flow} as introduced by Franken \etal \cite{FrRuWi11}, and a {\it convolution based approximation of mean curvature}. 
Below, we will first discuss the relevant parts of the former, then detail the latter and describe how both are combined for the purpose of a robust and efficient approximation of Willmore flow.


\paragraph{Phase field based minimizing movement scheme for Willmore flow.} 
Following Franken \etal \cite{FrRuWi11}, we assume that hypersurfaces under consideration are represented by Modica--Mortola-type phase field functions \cite{MoMo77} 
with periodic boundary conditions.
In what follows, we assume that all functions and interfaces are sufficiently smooth.
We consider the interfacial energy
\begin{equation} \label{eq:aeps}
\Per^\eps[v] \coloneqq \frac{1}{2} \int_\Omega \eps \lvert \nabla v \rvert^2 + \frac{1}{\eps} \Psi(v) \d x
\end{equation}
with the double well potential \(\Psi(v) = \frac{9}{16}(1 - v^2)^2\). 
Modica and Mortola \cite{MoMo77} showed that the \(\Gamma\)-limit of \(\Per^\eps[\cdot]\) in the \(L^1\) topology is half the total variation of a function \(u\in BV(\Omega;\{-1,1\})\) in \(\Omega\), \ie the perimeter of the set \(\{u=1\}\).

Phase field functions minimizing \eqref{eq:aeps} follow an optimal profile in normal direction across the boundary of a set $\omega\subset \Omega$, which is \(\tilde v_\eps \colon \R \to \R; \, s \mapsto \tanh(- \frac{3s}{4\eps})\). 
Thus, \(\tanh \left(- \frac{3}{4\eps} \sdist(y, \omega) \right)\) is the 
optimal phase field profile of an interface \(\Gamma  = \partial \omega \cap \Omega\) 
for fixed \(\eps\), 
where the signed distance function of $\Gamma$ is defined as 
\(\sdist(x,\omega) = \dist(x,\omega) - \dist(x, \omega^c)\) with 
$\omega^c$ being the complement of $\omega$.
Now, let \(u^k_\eps\) denote the phase field representations of the hypersurface \(\Gamma^k\) with parametrization \(x^k\), which is the boundary of a subset \(\omega^k \subset \Omega\), \ie \(\Gamma^k = \partial \omega^k \cap \Omega\). 
Similarly, let \(u_\eps\) be the representation of  \(\Gamma = \partial \omega \cap \Omega\) with parametrization \(x\), and \(v_{\tilde \tau}[u_\eps]\) a phase field representation of the image of \(\Gamma\) under timestep of mean curvature flow with step size $\tilde \tau$ and initial data $u_\eps$. 
Following Franken \etal \cite{FrRuWi11}, we take into account a further minimizing movement scheme to define the operator \(v_{\tilde \tau}[\cdot]\) variationally and obtain
\begin{equation} \label{eq:implicitMCF}
v_{\tilde \tau}[u] \coloneqq \argmin_{v\in H^1(\Omega)} \left(\eps \|u - v\|^2_{L^2(\Omega)} + 2 \tilde \tau \Per^\eps[v]\right).
\end{equation}

To translate \eqref{eq:nested_willmore_step} to the phase field context, one observes
for a shift \(\delta\) of the optimal profile $\tilde v_\eps$ in one dimension
\begin{align*}
\eps  \int_\R &\left(\tilde v_\eps(s)-\tilde v_\eps(s-\delta)\right)^2 \d s = \delta^2 (1 + \Theta(\delta, \eps))\,,
\end{align*}
where \(\Theta(\delta, \eps) = C \left( \frac{\delta}{\eps} + \frac{\delta^2}{\eps^2} \right)\). 
A detailed calculation is given in \cite{FrRuWi11}.
Chosing \(\delta = \eps^\beta\) with \(\beta>1\) implies \(\Theta(\delta, \eps) \leq C \delta^{(1-\frac1\beta)}\) and thus
\begin{align*}
\eps \int_\Omega \left( u_\eps(x+\delta(x) n(x))- u_\eps(x)\right)^2 \d x &=
\int_\Gamma \delta^2(x) \Hd^{d-1} \left(1+ O(\Theta(\lVert\delta\rVert_{\infty}, \eps))\right)\,,
\end{align*}
where now \(\delta\) is some function on \(\Gamma\), \(n\) is the normal field of \(\Gamma\) and \(\delta\) is assumed to be extended constantly in normal direction to \(\Gamma\). 
Next, assuming that all involved phase field functions \(u^k\), \(u\), and \(v_{\tilde \tau}[u]\) follow the optimal profile, one observes that
\begin{align*}
\eps \lVert v_{\tilde \tau}[u] - u \rVert_{L^2(\Omega)}^2 &=  \lVert y_{\tilde \tau}[x] - x\rVert_{L^2(\Gamma)}^2 \left(1 + O(\Theta(\lVert y_{\tilde \tau}[x] - x\rVert_{L^\infty(\Gamma)}, \eps))\right), \\
\eps \lVert u - u^k \rVert_{L^2(\Omega)}^2 &=  \lVert x - x^k \rVert_{L^2(\Gamma^k)}^2 \left(1 + O(\Theta(\lVert x-x^k\rVert_{L^\infty(\Gamma^k)}, \eps))\right).
\end{align*}
The above estimates were presented in \cite{FrRuWi11} using the double well function $\Psi(v) = (1 - v^2)^2$ and thus consistently with an additional factor $\tfrac43$ (and optimal profile $\tilde v_\eps(s) = \tanh(-s\eps^{-1})$).

Finally, with these approximations at hand, 
one can define the energy 
\begin{equation}
\label{eq:discrete_willmore_flow_energy}
e^\eps[u^k,u] =  \eps\lVert u - u^k \rVert_{L^2(\Omega)}^2+\frac{\tau\eps}{\tilde\tau^2} \lVert v_{\tilde \tau}[u] - u \rVert_{L^2(\Omega)}^2
\end{equation}
for two functions $u^k$ and $u$ considered as phase field descriptions of $\Gamma^k$ and $\Gamma$.
By our above estimates, this energy is equivalent to the energy \(e[\cdot]\) associated with the variational time-discretization in \autoref{eq:nested_willmore_step}.

Altogether for sufficiently small phase field parameter \(\eps\) and sufficiently small time step sizes \(\tau\), \(\tilde \tau\), this leads to the following nested variational time discretization of Willmore flow:
\begin{definition}[Variational time discretization of Willmore flow \cite{FrRuWi11}]\label{def:minProblem}
	For $e^\eps[\cdot,\cdot]$ defined in \autoref{eq:discrete_willmore_flow_energy} based on some mapping 
	$v_{\tilde \tau} \colon L^2(\Omega) \to L^2(\Omega)$ with $\tau, \; \tilde \tau >0$
	and some $u^0 \in L^2(\Omega)$ 
	we iteratively compute 
	\begin{equation}
	\label{eq:discrete_willmore_step}
	u^{k+1} = \argmin_{u \in L^2(\Omega)} e^\eps[u^k, u]
	\end{equation}
	as the time discrete phase field solution at time \((k+1)\tau\) for $k\in\N_0$.
\end{definition}

\paragraph{Convolution based approximation of mean curvature.}
Thus far, we have defined \(v_{\tilde \tau}[u]\) variationally, and obtained an inner variational problem to be solved for every $u$ in the outer problem \autoref{eq:discrete_willmore_step}.
Instead of solving the Euler-Lagrange equation associated with \autoref{eq:implicitMCF} for every function $u$ arising in the outer problem, we pick up the approach by Bretin \etal \cite{BrDeMa22}.
To this end, we define a spatially continuous neural operator \(v^{f,\kernel}_{\tilde \tau}\colon L^2(\Omega) \to L^2(\Omega)\) that will be trained to approximate \({v}_{\tilde \tau}[\cdot]\). We follow the structure proposed for the discrete context in \cite{BrDeMa22} and combine a nonlinear activation function and a convolution operator to define the neural operator \(v^{f,\kernel}_{\tilde \tau}\).
This leads to the definition of the still spatially continuous neural operator  
\begin{equation}
\label{eq:mbo_structure}
v^{f,\kernel}_{\tilde \tau}[u] \coloneqq f_{\tilde \tau} \circ (\kernel_{\tilde \tau} * u),
\end{equation}
where one first applies a convolution kernel  \(\kernel_{\tilde \tau} \in L^2(\R^d) \), followed by the concatenation with a nonlinear activation function  \(f_{\tilde \tau} \in  C^0(\R)\).
When applying the kernel $\kappa_{\tilde \tau}$ to $u$, we assume that $u$ is periodically extended to all of $\R^d$.
The convolution $\kappa_{\tilde \tau}  \ast u$ with $\kappa_{\tilde \tau} \in L^2(\R^d)$ is in $C^0(\R^d)$.
Thus, $v^{f,\kernel}_{\tilde \tau}[u] \in C^0(\R^d)$ as well and can in particular be evaluated point-wise.

When using the neural operator \(v^{f,\kernel}_{\tilde \tau}\) in the nested scheme \autoref{eq:discrete_willmore_flow_energy}, we obtain a time-discrete but spatially still continuous scheme.
It combines a standard optimization problem over functions in $L^2(\Omega)$ with a neural operator acting on functions in $L^2(\Omega)$.

\begin{remark}[Relation to the Merriman--Bence--Osher scheme and semi-implicit time stepping for Allen-Chan flow]
	To motivate the neural network architecture used by Bretin \etal we first observe that 
	the rescaled $L^2$-gradient flow of $\Per^\eps$ with time discretization \autoref{eq:implicitMCF} 
	is given by the Allen--Cahn equation
	\(  \partial_t u  -\laplace u + \tfrac1{2\eps^2} \dwell^\prime(u)=0.\) 
	Evaluating the nonlinearity \(\dwell^\prime\) implicitly at \(u^{k+1}\) 
	and the Laplace operator explicitly at \(u^k\) we obtain the time-discrete equation
	\begin{equation}\label{eq:discreteAllenCahn}
	\tfrac{u^{k+1} - u^k}{\tilde \tau} - \laplace u^k + \tfrac{1}{2\eps^2} \dwell^\prime(u^{k+1}) = 0
	\end{equation}
	to iteratively compute the sequence \((u^k)_{k\in\N}\) of phase fields given an initial phase field \(u^0\). 
	The function \(\phi_{\eps, \tilde \tau}(u) = u + \frac{\tilde{\tau}}{2\eps^2} \dwell^\prime(u)\) is monotone for \(\frac{\tilde{\tau}}{\eps^2} < \frac{8}{9}\) and thus invertible.
	Hence, \autoref{eq:discreteAllenCahn} can be rewritten as 
	\(u^{k+1} = \phi_{\eps, \tilde \tau}^{-1}((\Id +  \tilde \tau \laplace) u^k)\) and finally using  
	the approximation \((\Id +  \tilde \tau \laplace) u^k = e^{-\tilde\tau \laplace} u^k + O(\tilde\tau^2)\) 
	we obtain
	\(u^{k+1} = \phi^{-1}(e^{-\tilde\tau \laplace} u^k).\)
	This indeed reflects the structure \autoref{eq:mbo_structure} proposed by Bretin \etal \cite{BrDeMa22} 
	and resembles the Merriman--Bence--Osher (MBO) scheme \cite{MeBeOs92,MeBeOs94} for characteristic functions, where one first solves the linear heat equation with time step \(\tilde \tau\) for a characteristic function as the initial data and then applies a thresholding function to obtain the time step updated characteristic function. 	In this sense, the application of the function \(\phi_{\eps, \tilde \tau}^{-1}\) acts as a soft thresholding.
	For the convergence to mean curvature motion, we refer to \cite{Ev93} and \cite{BaGe95}.
	Recently, Budd and van Gennip \cite{BuGe20} studied a semi-implicit time-discretization for the double obstacle	Allen--Cahn equation on graphs and proved that the MBO scheme coincides with a particular choice of a semi-implicit scheme for Allen--Cahn flow. 
	The scheme takes the form  \(u^{k+1} = \rho \circ \left(e^{-\tilde\tau \Delta} u^{k}\right)\), with $\Delta$ being the (positive definite) graph Laplacian, \(\rho\) a monotone Lipschitz continuous (activation) function, 
	and $e^{-\tilde\tau \Delta}$ the linear operator representing one timestep of the heat flow with time step size \(\tilde\tau\).
	The authors explicitly refer to the analogy of a convolutional neural network in \cite[footnote 6]{BuGe20}.
\end{remark}
As proposed in \cite{BrDeMa22}, both the kernel \(\kernel_{\tilde \tau}\) and the scalar function \(f_{\tilde \tau} \in  C^0(\R)\) are learned from data. 
Bretin \etal suggested using (discrete) phase field profiles for the explicitly known evolution of hyperspheres in \(\R^d\) as training data. 
Indeed, the radius of a hypersphere evolving under mean curvature flow with initial radius $r_0$ at time \(\tilde \tau\) is given by 
\begin{equation}
\label{eq:analyticSolutionMCFCircle}
R(r_0, \tilde \tau)=\sqrt{r_0^2-2(d-1)\tilde \tau}\,.
\end{equation}
For a hypersphere of radius $r$, we define the corresponding phase field profile by
\begin{equation}
\label{eq:analyticSolutionMCFCirclePf}
u_r(x) \coloneqq \tanh \left(\frac{3(r- | x |)}{4\eps}\right)\,.
\end{equation}
The aim is to learn  \(f_{\tilde \tau}\) and \(k_{\tilde \tau}\) such that, for a range of radii \(r\), the phase field representation of a hypersphere of radius $r$ is mapped to one of a hypersphere of radius $R(r, \tilde \tau)$, \ie~\(v^{f,\kernel}_{\tilde \tau}[u_r] \approx u_{R(r, \tilde \tau)}\).
This way, $v^{f,k}_{\tilde \tau}$ approximates the time-discrete solution operator of mean curvature flow on the chosen training data. 
We formulate  an optimization problem by turning the desired relation into a least-squares loss, and thus determine $f_{\tilde \tau}$ and $k_{\tilde \tau}$ as solutions of

\begin{equation}
\label{eq:mcf_scheme_optimization}
\min_{f,\kernel} \int_{r_\mathrm{min}}^{r_\mathrm{max}} \rVert v^{f,\kernel}_{\tilde \tau}[u_r] - u_{R(r, \tilde \tau)}  \lVert^2_{L^2(\R^d)} \d r,
\end{equation}
where one optimizes over suitable classes of nonlinear functions $f$ and convolution kernels $\kappa$ for a suitable choice of 
\(r_\mathrm{min}\) and \(r_\mathrm{max}\) .

\begin{remark}
	In their study, Bretin \etal \cite{BrDeMa22} focused on a discrete formulation on regular grids and directly developed a time and space discrete evolution operator for mean curvature motion. 
	The underlying neural network consists of a single convolution layer and a scalar activation function realized with a multilayer perceptron.
	Bretin \etal demonstrated empirically that this also leads to a reasonable approximation of mean curvature flow for fairly general initial data and corresponding phase fields.  
	Functions of the form \autoref{eq:mbo_structure} are efficient to evaluate and differentiate when implemented on discrete grids using neural networks.
	We will retrieve this discrete formulation in the next section.
\end{remark}

Furthermore, employing the scheme composed of \autoref{eq:discrete_willmore_flow_energy} and \autoref{eq:mbo_structure}, we can show the existence of time-discrete Willmore flow, \ie the existence of minimizers of \autoref{eq:discrete_willmore_flow_energy} for fixed scalar function\(f\) and convolution kernel $\kernel$:
\begin{proposition}
	\label{prop:existence}
	For $\Omega=(0,1)^d$, $u^0\in L^2(\Omega)$ periodically extended on $\R^d$, and \(e^\eps[u^0,\cdot]\) as in \eqref{eq:discrete_willmore_flow_energy} with \(v_{\tilde \tau}\) of the form \(v_{\tilde \tau}[u] = f(\kernel\ast u)\) for fixed \(f\in C(\R)\) and $\kernel \in L^2(\R^d)$ there exists a minimizer \(u\in L^2(\Omega)\) of \(e^\eps[u^0,\cdot]\) in the class of periodically extended $L^2(\Omega)$ functions.
\end{proposition}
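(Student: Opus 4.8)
\emph{Plan (direct method of the calculus of variations).} The functional $e^\eps[u^0,\cdot]$ is nonnegative and finite at the competitor $u=u^0$, since $v_{\tilde\tau}[u^0]=f(\kernel\ast u^0)$ is continuous, hence bounded on the (compact) periodicity cell and thus in $L^2(\Omega)$; therefore the infimum $m:=\inf_{u}e^\eps[u^0,u]$ is finite. First I would fix a minimizing sequence $(u_n)\subset L^2(\Omega)$ with $e^\eps[u^0,u_n]\to m$. Coercivity comes for free from the first term: $\eps\|u_n-u^0\|_{L^2(\Omega)}^2\le e^\eps[u^0,u_n]\le m+1$ for large $n$, so $(u_n)$ is bounded in $L^2(\Omega)$. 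Since $L^2$ is a Hilbert space, after passing to a subsequence (not relabeled) I obtain $u_n\rightharpoonup\bar u$ weakly in $L^2(\Omega)$. Because all functions are periodically extended, it is convenient to read the convolution $\kernel\ast u$ as a convolution on the torus $\mathbb T^d=\R^d/\Z^d$ against the periodized kernel, identifying $L^2(\Omega)\cong L^2(\mathbb T^d)$; I will use that the periodized kernel defines an $L^2$ function there, which is what makes $\kernel\ast u\in C^0$ as recalled above.

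The crux is the nonlinear term $\|v_{\tilde\tau}[u_n]-u_n\|_{L^2(\Omega)}^2=\|f(\kernel\ast u_n)-u_n\|_{L^2(\Omega)}^2$, in which $u_n$ converges only weakly. The key idea is that convolution is a smoothing, compactifying operation, so the nonlinear part converges \emph{strongly}. Concretely, with $C:=\sup_n\|u_n\|_{L^2(\Omega)}<\infty$, Cauchy--Schwarz gives the uniform bound $|(\kernel\ast u_n)(x)|\le\|\kernel\|_{L^2}\,C=:R$ for all $x$ and all $n$, and the same estimate applied to differences yields $|(\kernel\ast u_n)(x)-(\kernel\ast u_n)(x')|\le C\,\|\kernel(x-\cdot)-\kernel(x'-\cdot)\|_{L^2}$, whose right-hand side tends to $0$ as $|x-x'|\to0$ by continuity of translation in $L^2$, uniformly in $n$. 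Hence $\{\kernel\ast u_n\}$ is uniformly bounded and equicontinuous, and Arzel\`a--Ascoli makes it precompact in $C(\overline\Omega)$. Testing the weak convergence $u_n\rightharpoonup\bar u$ against the fixed $L^2$ function $\kernel(x-\cdot)$ identifies the pointwise limit as $(\kernel\ast u_n)(x)\to(\kernel\ast\bar u)(x)$, so in fact $\kernel\ast u_n\to\kernel\ast\bar u$ uniformly. Since $f\in C(\R)$ is uniformly continuous on the compact interval $[-R,R]$ containing all the values $\kernel\ast u_n$, I conclude $f(\kernel\ast u_n)\to f(\kernel\ast\bar u)$ uniformly, and therefore strongly in $L^2(\Omega)$ on the bounded domain.

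With strong convergence of the nonlinear part in hand, the rest is lower semicontinuity. The difference $f(\kernel\ast u_n)-u_n$ converges weakly in $L^2(\Omega)$ to $f(\kernel\ast\bar u)-\bar u$ (a strongly convergent term minus a weakly convergent one), so by weak lower semicontinuity of the norm
\[
\|f(\kernel\ast\bar u)-\bar u\|_{L^2(\Omega)}^2\le\liminf_{n}\|f(\kernel\ast u_n)-u_n\|_{L^2(\Omega)}^2 ,
\]
while $\|\bar u-u^0\|_{L^2(\Omega)}^2\le\liminf_n\|u_n-u^0\|_{L^2(\Omega)}^2$ for the same reason. Adding these with the positive weights $\eps$ and $\tau\eps/\tilde\tau^2$ gives $e^\eps[u^0,\bar u]\le\liminf_n e^\eps[u^0,u_n]=m$, so $\bar u$ is the sought minimizer.

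I expect the main obstacle to be exactly the nonlinear term: because $u_n$ converges only weakly, neither $f(\kernel\ast u_n)$ nor $\|f(\kernel\ast u_n)-u_n\|^2$ is continuous along the sequence a priori, and one cannot simply pass the weak limit through the continuous nonlinearity $f$. The resolution is that the convolution regularizes the input to $f$ enough (uniform bounds plus equicontinuity, hence Arzel\`a--Ascoli) to upgrade weak convergence of $u_n$ into strong convergence of $f(\kernel\ast u_n)$; only the residual linear term $-u_n$ is then handled by weak lower semicontinuity. A secondary technical point worth verifying is the periodic-convolution reduction itself, namely that $\kernel\ast u$ for periodic $u$ is well defined and that the periodized kernel lies in $L^2(\mathbb T^d)$, so that the Cauchy--Schwarz and translation-continuity estimates above are legitimate.
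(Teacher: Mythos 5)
Your proof is correct and follows essentially the same route as the paper: direct method with coercivity from the fidelity term, weak $L^2$ compactness, upgrading weak convergence to convergence of $\kernel\ast u_n$ via the pointwise estimate against the fixed kernel translates, uniform continuity of $f$ on the compact range $[-R,R]$ to get strong $L^2$ convergence of the nonlinear part, and weak lower semicontinuity of the norm for the remaining terms. The only cosmetic difference is that you invoke equicontinuity and Arzel\`a--Ascoli to obtain uniform convergence of $\kernel\ast u_n$, where the paper settles for pointwise convergence plus the uniform bound (and, implicitly, dominated convergence); both arguments share the same unverified technical point about periodization of the kernel, which the paper also glosses over by writing the convolution as an integral over $\Omega$.
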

\begin{proof}
	Let \((u_j)_j\subset L^2(\Omega)\) be a minimizing sequence of $e^\eps[u^0,\cdot]$, with
	\begin{align*}
	C = e^\eps[u^0, u^0] \geq e^\eps[u^0, u_j] \geq \eps \|u^0 - u_j\|^2_{L^2(\Omega)} 
	\geq \frac{\eps}{2} \|u_j\|^2_{L^2(\Omega)} - \eps \|u^0\|^2_{L^2(\Omega)}\,,
	\end{align*}
	for a constant $C$ depending on $\Vert u^0\Vert_{L^2(\Omega)}$. Here, we used that 
	$\Vert \kernel\ast u_0 \Vert_{C^0(\bar \Omega)}$ is bounded and applied Young's inequality.
	Hence, \(\Vert u_j\Vert_{L^2(\Omega)}\leq \hat C\) for some constant $\hat C$, 
	and thus there exists \(u\in L^2(\Omega)\), 
	and a subsequence (not relabeled), with \(u_j\rightharpoonup u\) weakly in \(L^2(\Omega)\). 
	Once more using that \(\kernel \in L^2(\R^d)\), we have 
	\begin{align*}
	(\kernel \ast u_j)(x) = \int_\Omega \kernel(x - y) u_j(y) \d y \rightarrow \int_\Omega \kernel(x - y) u(y) \d y = (\kernel\ast u)(x) 
	\end{align*}
	for $j\rightarrow \infty$.
	Furthermore, we observe
	\begin{align*}
	\sup_{x\in\Omega}\left| (\kernel \ast u_j)(x) \right| \leq \|\kernel\|_{L^2(\R^d)}\|u_j\|_{L^2(\Omega)}  \leq \hat C  \|\kernel\|_{L^2(\R^d)}.
	\end{align*}
	Since \(f\) is uniformly continuous on \([-\hat C \|\kernel\|_{L^2(\R^d)},\hat C\|\kernel\|_{L^2(\R^d)}]\), we obtain  \(f(\kernel\ast u_j) \rightarrow f(\kernel\ast u)\) pointwise and in $L^2(\Omega)$. Finally, lower semicontinuity of the $L^2$ norm implies
	\begin{align*}
	\inf_{\tilde u\in L^2(\Omega)}e^\eps[u^0, \tilde u] = \liminf_{j\rightarrow \infty} e^\eps[u^0, u_j] \geq e^\eps[u^0, u].
	\end{align*}
	Thus \(u\in L^2(\Omega)\) and minimizes \(e^\eps[u^0,\cdot]\) in the class of periodically extended $L^2(\Omega)$ functions.
\end{proof}

\section{Spatial discretization}
\label{sec:discretization}

\paragraph{Minimizing movement scheme for Willmore flow.}
To discretize the time-discrete Willmore flow in $\R^d$ based on the minimizing movement scheme \autoref{eq:discrete_willmore_flow_energy}, we consider a regular grid on \([0,1]^d\) with gridsize $h=\tfrac1n$ for $n\in\N$ and nodes $x_\alpha = (\tfrac{\alpha_1}{n},\cdot, \tfrac{\alpha_d}{n})$ for a multi-index $\alpha$ in the multi index set $\mathcal{I}_n \coloneqq (\{0,\ldots, n\})^d$.
On this grid we consider functions represented  by nodal vectors $U\in \R^{\vert \mathcal{I}_n\vert}$ with function values $U_{\alpha}$ at nodes $x_\alpha$.
A discrete \(L^2\) norm is defined as
\begin{align*}
\lVert U \rVert_{L^2} \coloneqq  \sqrt{(n+1)^{-d} \sum_{\alpha\in \mathcal{I}_n} (U_\alpha)^2},
\end{align*}
\ie the square root of the average of the squared entries of \(U\).
Thus, the discrete counterpart of the energy \autoref{eq:discrete_willmore_flow_energy} is given by
\begin{equation}
\label{eq:fully_discrete_willmore_flow_energy}
E^\eps[U^k,U] = \eps\lVert U - U^k \rVert_{L^2}^2+\frac{\tau\eps}{\tilde\tau^2} \lVert V_{\tilde \tau}[U] - U \rVert_{L^2}^2,
\end{equation}
where \(V_{\tilde \tau}[U]\) denotes a spatially discrete counterpart of \autoref{eq:mbo_structure}.

\paragraph{Neural approximation of discrete mean curvature flow.}
Here, we follow the approach by Bretin \etal \cite{BrDeMa22} and consider discrete kernels and nonlinear activation functions, which are defined themselves as networks.
For a discrete kernel $\NNkernel \in \R^{\Z^d}$, one defines the nodal vector resulting from the discrete convolution with this kernel as 
\begin{align*}
(\NNkernel\ast U)_\alpha \coloneqq \sum_{\beta \in \Z^d} \NNkernel_\beta U_{\alpha+\beta}.
\end{align*}
Here, we assume periodicity of the discrete function to which we apply the discrete convolution, \ie \(U_{\alpha+\beta n}=U_{\alpha}\) for \(\beta \in \Z^d\).
Now, one considers discrete kernels with a fixed width $n_\NNkernel$ and a \(n_\NNkernel^d\) stencil, \ie \(\NNkernel_{\alpha} =0\) for $\max_{i=1,\ldots,d} \vert \alpha_i \vert > \tfrac{n_\NNkernel-1}{2}$. 
To ensure consistency with the continuous mean curvature motion, the necessary size \(n_\NNkernel\) of the kernel depends on the time step size $\tilde \tau$ and the grid size $h$.
At the same time, smaller kernels are more efficient to train, which creates a trade-off between accuracy and speed. 
This trade-off will be explored in our numerical experiments below.
The point-wise function \(f\)  is discretized using a fully-connected neural network \(F^\theta \colon \R \to \R\) with $L$ layers and layer sizes \(N_1,\ldots,N_L\). 
The $l$th layer is described in terms of a weight matrix \(W^{l} \in \R^{N_{l} \times N_{l-1}}\) with $N_0=1$ and a bias vector \(b^{l} \in \R^{N_{l}}\).
These degrees are gathered in a parameter vector \(\theta = (W^1,\ldots, W^{L},b^1,\ldots, b^{L})\).
Then, one defines \(F^\theta(s)=s^L\) with \(s^{l} \coloneqq  \rho\left( W^{l} s^{l-1} + b^{l} \right)\) and \(s^0=s\) and the choice \(\rho(s) = \exp(-s^2)\) as the nonlinear activation function. 
In practice, we used six layers with sizes \(32, \,16, \,8, \,4, \,2,\,1\).
For given parameters \(\theta\) and \(\NNkernel\), we obtain the discrete operator 
\begin{equation}
\label{eq:mcf_operator_discrete}
V^{\theta,\NNkernel}_{\tilde \tau}[U] \coloneqq F^\theta(\NNkernel \ast U).
\end{equation}

Now, one approximates the optimization problem \autoref{eq:mcf_scheme_optimization} via our discretization and a sampling of training data.
To this end, one considers $m$ radii \(r_1,\ldots,r_{m}\) sampled uniformly from an interval \([r_\mathrm{min}, r_\mathrm{max}]\) and minimizes the loss functional
\begin{align}
\label{eq:mcf_scheme_optimization_discrete}
\mathcal{L}[\theta,\NNkernel] =  
\frac{1}{m} \sum_{i=1}^{m} \rVert V^{\theta,\NNkernel}_{\tilde \tau}[U_{r_i}] - U_{R(r_i, \tilde \tau)}  \lVert^2_{L^2},
\end{align}
over the total set of parameters \((\theta,\NNkernel)\), where \(U_r\) are nodal evaluations of the hypersphere phase fields \(u_r\).
We approximately solve problem \autoref{eq:mcf_scheme_optimization_discrete} using the Adam optimizer \cite{KiBa15}, with $m=100$, $r_\mathrm{min} = 0.05$, $r_\mathrm{max} = 0.4$, and usually employ mini-batching, \ie approximating the sum in \autoref{eq:mcf_scheme_optimization_discrete} using only ten randomly drawn radii, to speed up the minimization as was proposed by Bretin \etal \cite{BrDeMa22}.
The overall method is described in \autoref{alg:mcf_training}.
The resulting neural phase field operator \(V^{\theta,\NNkernel}_{\tilde \tau}[\cdot]\) approximates the PDE solution \(v_{\tilde \tau}\) defined in \autoref{eq:implicitMCF} and is inexpensive to evaluate.

\begin{algorithm}[h!]
	\caption{Learning the fully discrete mean curvature operator (offline phase) \cite{BrDeMa22}}
	\label{alg:mcf_training}
	\begin{algorithmic}[1]
		\Require Time step size $\tilde \tau$ of mean curvature flow, interface parameter $\eps$, kernel size \(n_\NNkernel\), gridsize \(h=\frac{1}{n}\), number of samples $m$, minimal radius $r_\mathrm{min}$, maximal radius $r_\mathrm{max}$
		\Ensure Discrete convolution kernel $\NNkernel$ and parameter vector \(\theta\)
		\State Create the training data by uniformly sampling $m$ radii $r_1,\ldots,r_m$ from the interval \([r_\mathrm{min}, r_\mathrm{max}]\) and computing the corresponding pairs of discrete phase fields \((U_{r_i}, U_{R(r_i, \tilde \tau)})\)
		\State Initialize kernel $\NNkernel$ and parameter vector \(\theta\) as zero or using interpolation of coarser resolution
		\While{not converged}
		\State \begin{varwidth}[t]{\linewidth} Randomly sample $\tfrac{m}{B}$ mini-batches \(\{i_1,\ldots,i_B\}\) of size \(B\) from \(\{1,\ldots,m\}\) \\ by drawing without replacement\end{varwidth} 
		\ForAll{mini-batches \(\{i_1,\ldots,i_B\}\)}
		\State Compute the loss for the mini-batch
		\begin{equation*}
		\mathcal{L}[\theta,\NNkernel] = \frac{1}{B} \sum_{k=1}^{B} \rVert V^{\theta,\NNkernel}_{\tilde \tau}[U_{r_{i_B}}] - U_{R(r_{i_B}, \tilde \tau)}  \lVert^2_{L^2}
		\end{equation*}
		\State Compute the gradient of the previous loss using back-propagation
		\State Update the parameters \(\theta\) and \(\NNkernel\) using the Adam optimizer
		\EndFor
		\EndWhile
	\end{algorithmic}
\end{algorithm}

\begin{figure}[t]
	\includegraphics[width=\textwidth]{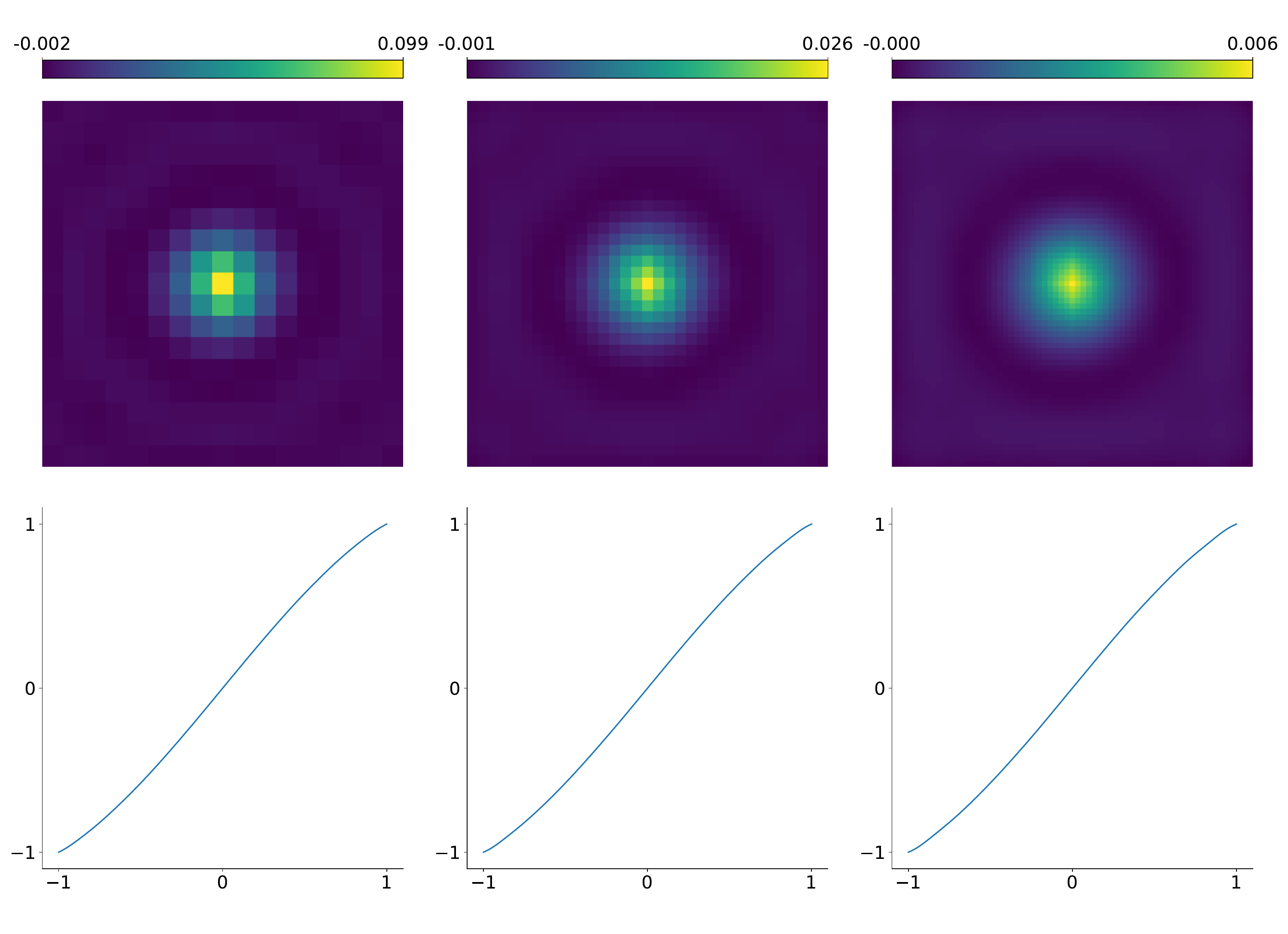}
	\caption{
		Learned networks for \(\eps = 2^{-6}\) and \(\tilde \tau = 2^{-14}\) are displayed via a color coding of the learned kernels \(\NNkernel\) and the graphs of learned activation function \(F^\theta\) on the interval \([-1, 1]\) for increasing $n = 128,\, 256,\, 512$, with stencil widths  $n_K=17,\,33,\,65$, respectively.
	}
	\label{fig:LearnedNetwork}
\end{figure}

\paragraph{Hybrid scheme for Willmore flow}
We employ Newton's method with Armijo line search to minimize \(E^\eps[U^k,U]\) over \(U\).
Thus, to determine the descent direction \(P\), we approximately solve the linear system \(D^2E^\eps[U^k, \cdot] P = DE^\eps[U^k,\cdot]\) using the conjugated gradient method (see \cite[Chapter 7]{NoWr06}). 
This also prevents us from having to assemble the Hessian. Instead, we only compute the corresponding matrix-vector product. 
Furthermore, we manually implemented the derivatives of \(E^\eps[U^k, \cdot]\) to improve performance.
The overall method is described in \autoref{alg:hybrid_scheme}.
We developed our hybrid method in Python using the PyTorch library \cite{PaGrMa19} and the nonlinear optimization algorithms used in the pytorch-minimize \cite{Fe21} package, which is based on the optimization module of SciPy \cite{ViGoOl20}.

\begin{algorithm}[h!]
	\caption{Hybrid scheme for Willmore flow (online phase)}
	\label{alg:hybrid_scheme}
	\begin{algorithmic}[1]
		\Require Discrete phase field $U^k$ at time $k\tau$ of Willmore flow, discrete convolution kernel $\NNkernel$, parameter vector \(\theta\), time step size $\tau$ of Willmore flow, time step size $\tilde \tau$ of inner mean curvature flow, interface parameter $\eps$
		\Ensure Discrete phase field $U^{k+1}$ at time $(k+1)\tau$ of Willmore flow
		\State Initialize $U^{k+1} = U^k$ 
		\While{not converged}
		\State Evaluate \(V^{\theta,\NNkernel}_{\tilde \tau}[U^{k+1}] = F^\theta(\NNkernel \ast U^{k+1})\) \Comment{\autoref{eq:mcf_operator_discrete}}
		\State Compute \(E^\eps[U^k,U^{k+1}] = \eps\lVert U^{k+1} - U^k \rVert_{L^2}^2+\frac{\tau\eps}{\tilde\tau^2} \lVert V_{\tilde \tau}[U^{k+1}] - U^{k+1} \rVert_{L^2}^2\) \Comment{\autoref{eq:fully_discrete_willmore_flow_energy}}
		\State Compute \(D_2E^\eps[U^k,U^{k+1}]\) and \(D_2^2E^\eps[U^k,U^{k+1}]\) using chain rule
		\State Solve \(D_2^2E^\eps[U^k, U^{k+1}] P = D_2E^\eps[U^k,U^{k+1}]\) using the CG method
		\State Determine step size $t$ via Armijo line search
		\State Set $U^{k´+1} = U^{k+1} + t P$
		\EndWhile
	\end{algorithmic}
\end{algorithm}
Here, $D_2$ and $D^2_2$ indicate the Jacobian and the Hessian with respect to the second argument, respectively.

\section{Numerical experiments}
\label{sec:experiments}
As usual in nonlinear optimization, the chosen initialization when solving \autoref{eq:mcf_scheme_optimization_discrete} impacts the result.
For example, Bretin \etal~\cite{BrDeMa22} successfully trained their network on a resolution of \(n=256\) with stencil width \(n_K=17\), \(\eps=2h\), and \(\tilde \tau=\eps^2\).
They initialized the kernel as zero and the parameters \(\theta\) randomly sampled from a normal distribution.
However, when refining $h$ while keeping \(\eps\) fixed, and thus consistently increasing \(n_K\), we observed a degradation of the approximation quality of \(V^{\theta,\NNkernel}_{\tilde \tau}[\cdot]\).
We mitigated this by first training the network on a coarse resolution as proposed by Bretin \etal~and then progressively passing to finer resolutions. 
In each step, we initialize the kernel using a bilinear interpolation of the coarser one. 
For the nonlinearity \(F^\theta\), we kept the previous parameters as initialization. 
The networks resulting from this training process are displayed in \autoref{fig:LearnedNetwork}.
Using color coding, we show the discrete convolution kernels $K$. We observe an improved radial symmetry under the spatial refinement 
and an overall similarity to heat kernels, as it was also observed by Bretin \etal. This is consistent with the expected smoothing behavior of mean curvature 	flow. The learned scalar function \(F^\theta\) is plotted on the interval \([-1, 1]\), and it is rather stable across different refinement levels.

\begin{figure}[t]
	\centering
	\includegraphics[width=.9\textwidth]{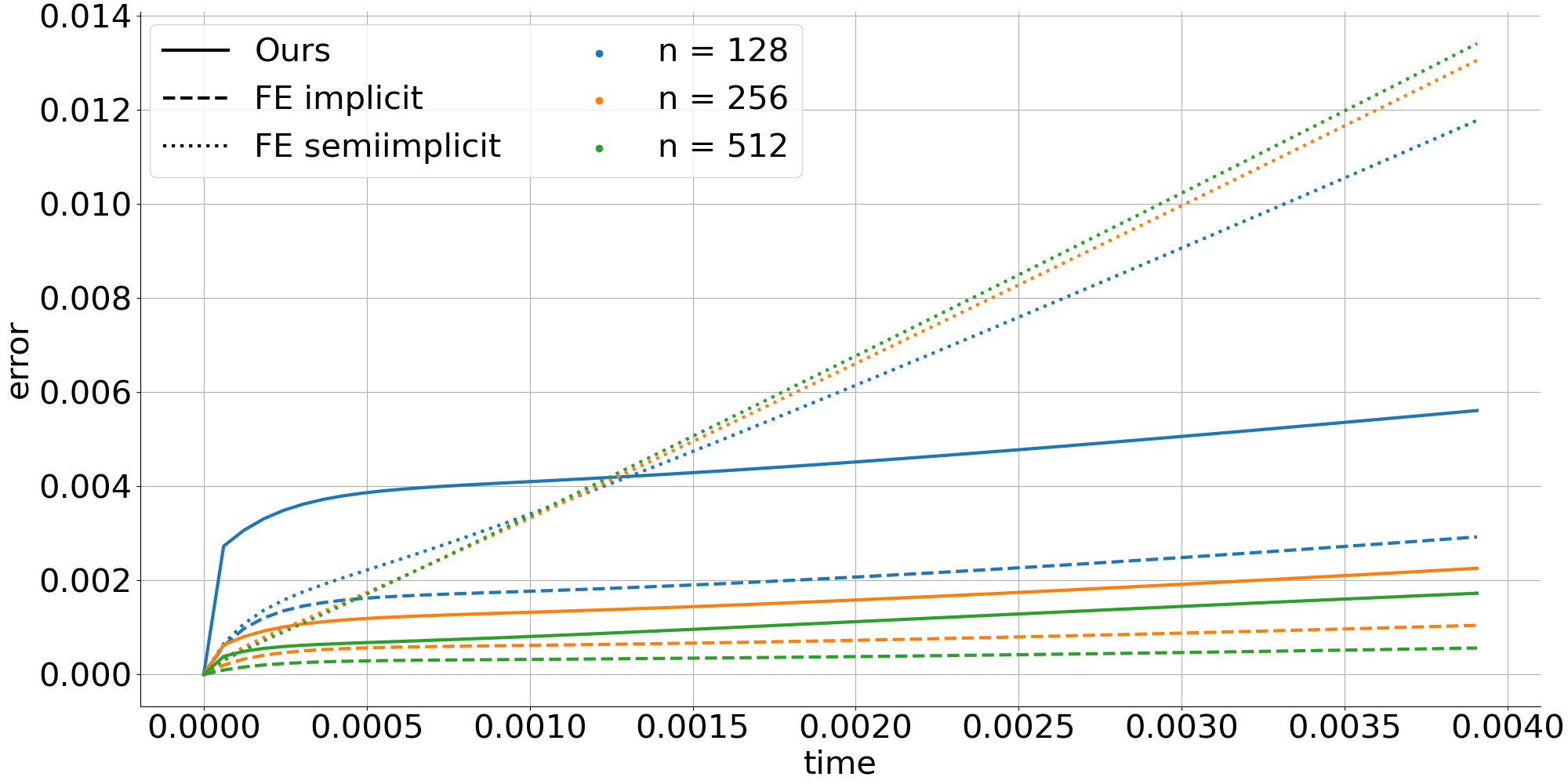}
	\caption{
		Convergence plot for mean curvature flow with fixed \(\eps = 2^{-6}\) and \(\tilde \tau = 2^{-14}\) while increasing \(n\), with \(n_K = \tfrac{n}{8} + 1\). 
		We plot the average $L^2$-error to the analytic solution along time for 30 circles with radii $r_i = 0.05\pi + \tfrac{0.15\pi i}{30}\,$, $i=0,\dots,29$. 
		The line styles correspond to the different methods and the colors to the varying resolution.
		For comparison: for the averaged $L^2$ distance between the solution at time $0.004$ and at the initial time zero, one obtains $\tfrac{1}{30}\sum_{i=1}^{30} \|U_{R(r_i, 0.004)} - U_{R(r_i, 0)}\|_{L^2} \approx 0.133$.
	}
	\label{fig:ConvergenceTestMCF}
\end{figure}

\begin{figure}[t]
	\centering
	\begin{tikzpicture}
	\node[inner sep=0pt] at (0.0, 0.0) {\includegraphics[width=.68\textwidth]{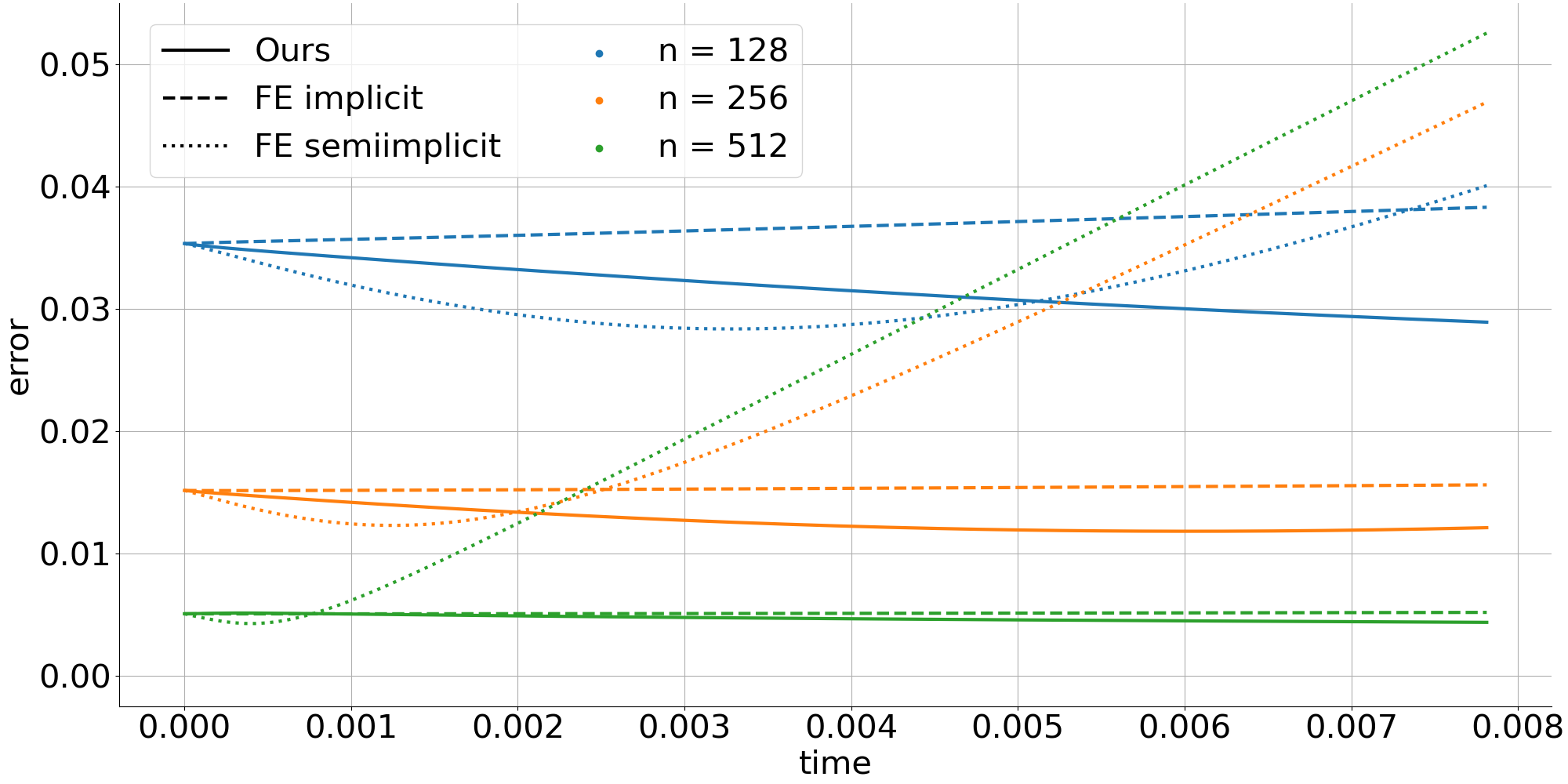}};
	\draw[-] (5.4,-2.47) -- (5.4,2.57);
	\node[inner sep=0pt] at (8, 0.1) {\includegraphics[width=.44\textwidth, trim={0 0cm 0cm 0cm},clip]{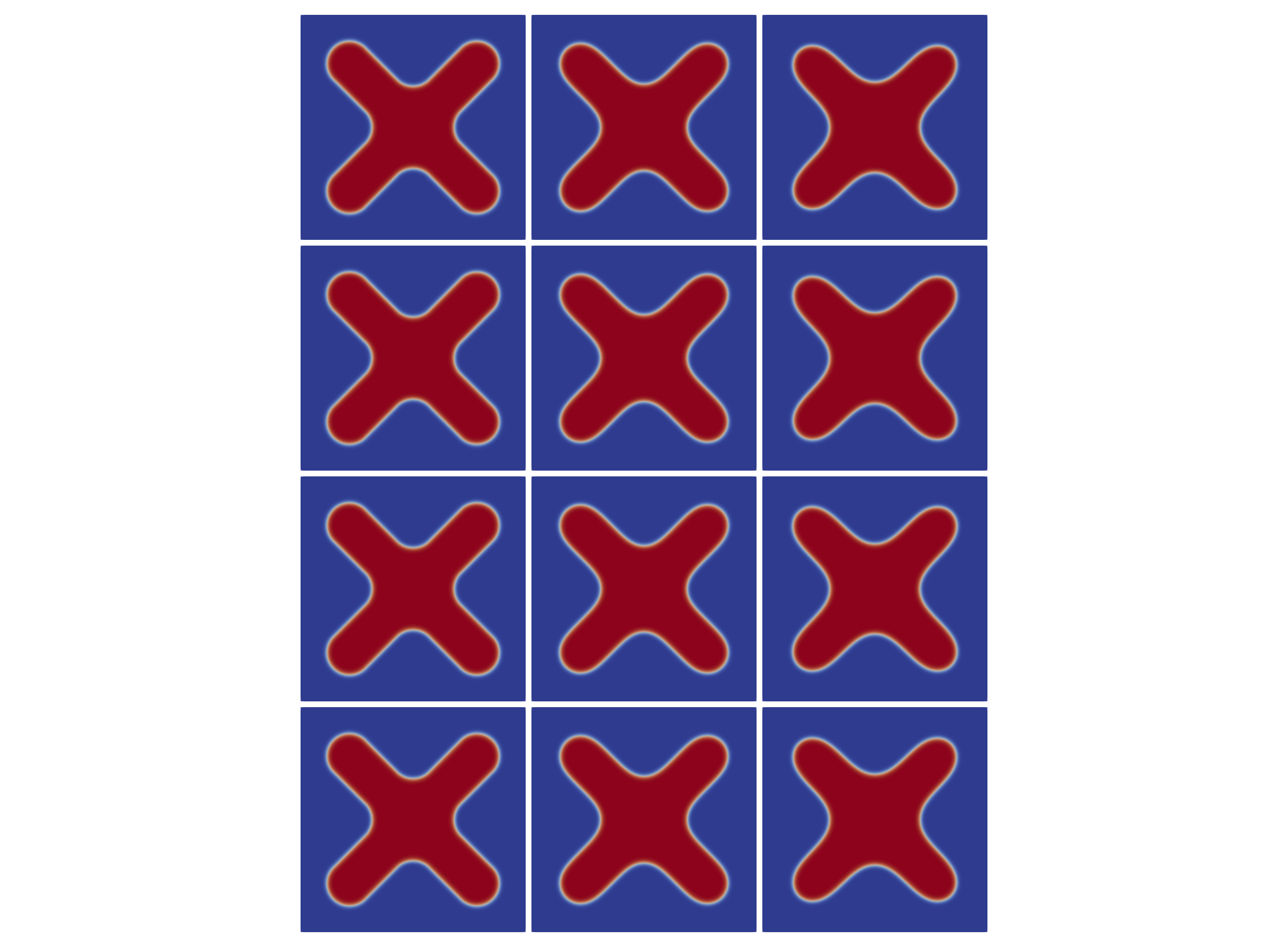}};
	\node[font=\tiny\linespread{0.9}\selectfont, align=center, rotate=90] at (5.8, 2) {Ours\\ $n=256$};
	\node[font=\tiny\linespread{0.9}\selectfont, align=center, rotate=90] at (5.8, 0.8) {FE\\ implicit\\ $n=256$};
	\node[font=\tiny\linespread{0.9}\selectfont, align=center, rotate=90] at (5.8, -0.6) {FE\\ semiimplicit\\ $n=256$};
	\node[font=\tiny\linespread{0.9}\selectfont, align=center, rotate=90] at (5.8, -1.8) {FE\\ implicit\\ $n=2048$};
	\node[font=\tiny\linespread{0.9}\selectfont, align=center] at (6.8, -2.55) {$t=0$};
	\node[font=\tiny\linespread{0.9}\selectfont, align=center] at (8., -2.53) {$t=2^{-8}$};
	\node[font=\tiny\linespread{0.9}\selectfont, align=center] at (9.3, -2.53) {$t=2^{-7}$};
	\end{tikzpicture}
	\caption{
		Left: Convergence plot for mean curvature flow with fixed \(\eps = 2^{-6}\) and \(\tilde \tau = 2^{-14}\) while increasing \(n\), with \(n_K = \tfrac{n}{8} + 1\) on a cross shape. 
		We plot the $L^2$-error to an implicit finite element solution with $n=2048$ and $\tilde \tau = 2^{-16}$. 
		The line styles correspond to the different methods and the colors to the varying resolution.
		Right: computed mean curvature flow evolution of the cross shape for the different schemes at time steps 0 ($t=0$), 64 ($t=2^{-8}$), and 128 ($t=2^{-7}$).
	}
	\label{fig:ConvergenceTestMCFcross}
	
\end{figure}

With this setup, we first investigate the convergence of the neural network-based MCF scheme \autoref{eq:mcf_operator_discrete} and \autoref{eq:mcf_scheme_optimization_discrete} in two dimensions under spatial refinement, \ie~we keep the scale parameter \(\eps\) and the time step size \(\tilde\tau\) fixed and increase the spatial resolution and the width of the kernel \(n_K\). 
We compare the neural network scheme by Bretin \etal~\cite{BrDeMa22} with a fully implicit (\cf~\autoref{eq:implicitMCF}) and semi-implicit finite element scheme.
In the latter, one computes the solution \(v\) at the next timestep as the solution of $\frac{v - u}{\tilde \tau} = \laplace v - \tfrac{1}{2\eps^2}  \dwell'(u)$ for given $u$ at the current timestep.
Here, we consider a multi-linear finite element approach on the regular quad mesh.
We perform the validation of convergence for the evolution of circles on the computational domain \(\Omega = (-1,1)^2\).
In \autoref{fig:ConvergenceTestMCF}, we plot the average $L^2$-error over 30 radii ranging from $0.05\pi$ to $0.2\pi$ when comparing to the exact solutions \eqref{eq:analyticSolutionMCFCirclePf} corresponding to circles with radii given by \eqref{eq:analyticSolutionMCFCircle}. 
The error is displayed along 64 timesteps of size $\tilde \tau = 2^{-14}$ for the different schemes and varying resolution.


\begin{figure}[htbp!]
	
	\begin{tikzpicture}
	\node[inner sep=0pt] at (4.04, 2.5) {\includegraphics[width=.5\textwidth]{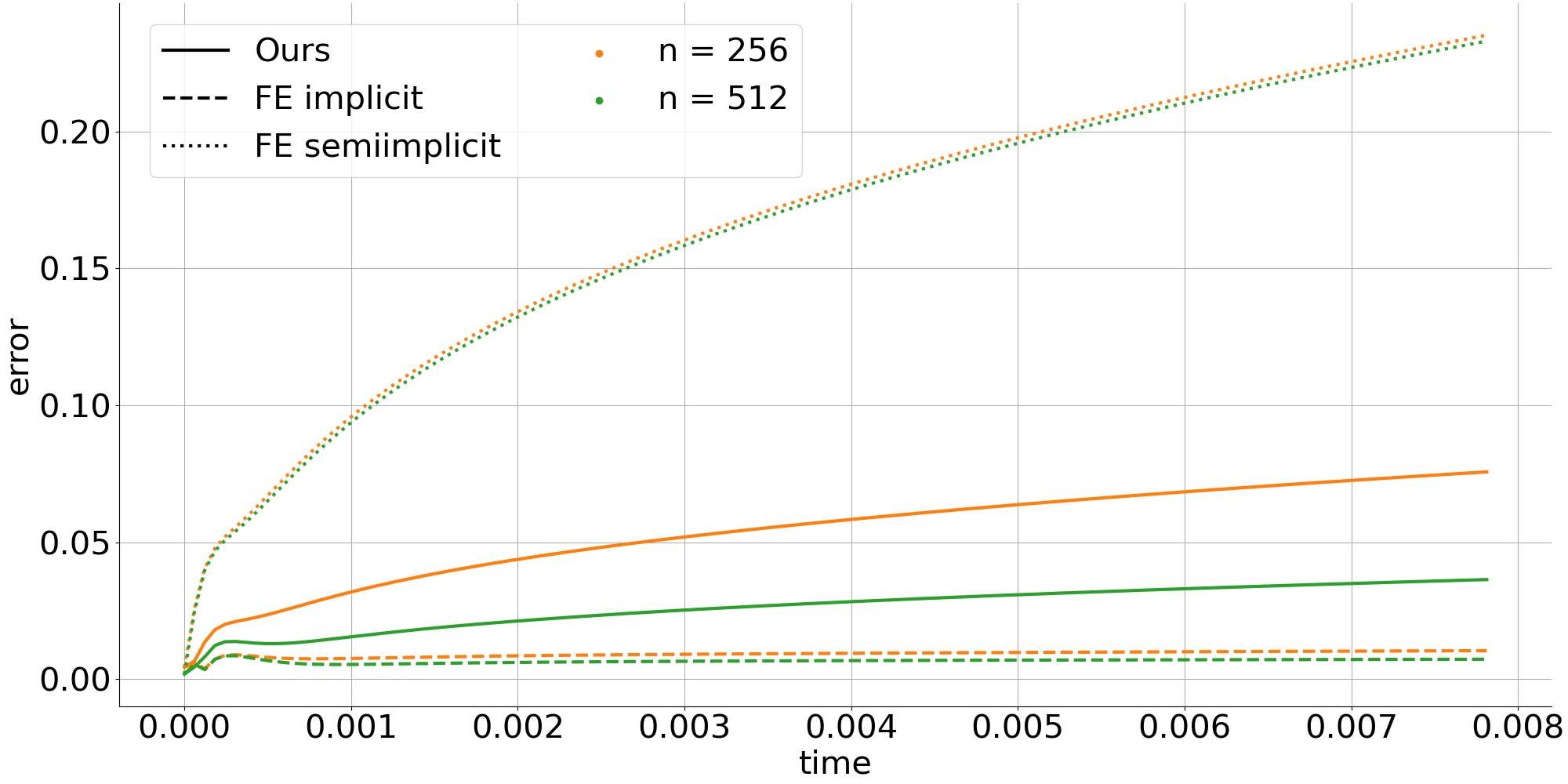}};
	\node[inner sep=0pt] at (-3.74, 2.5) {\includegraphics[width=.5\textwidth]{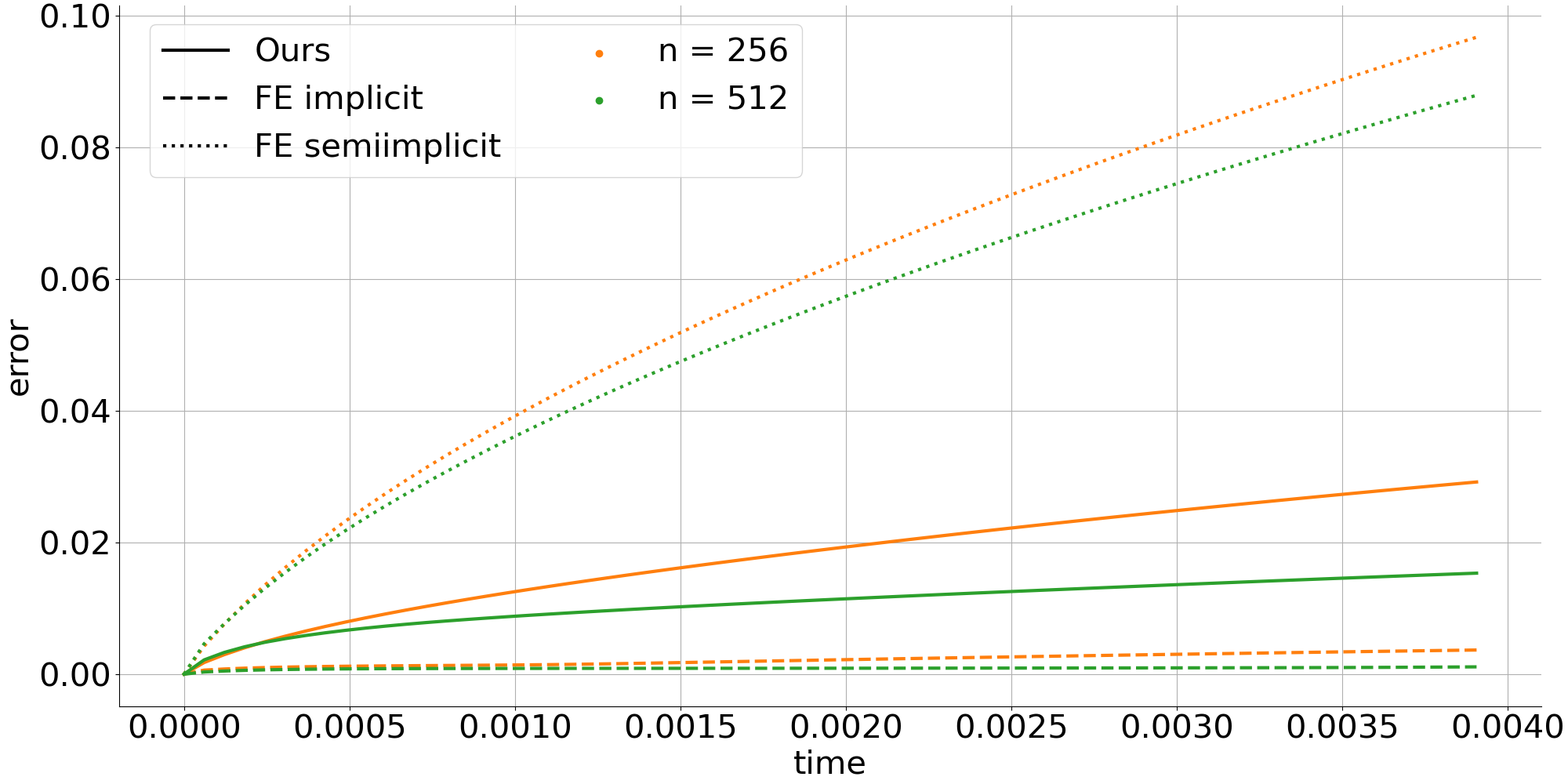}};
	
	\node[inner sep=0pt] at (-3.64, -2.3) {\includegraphics[width=.5\textwidth]{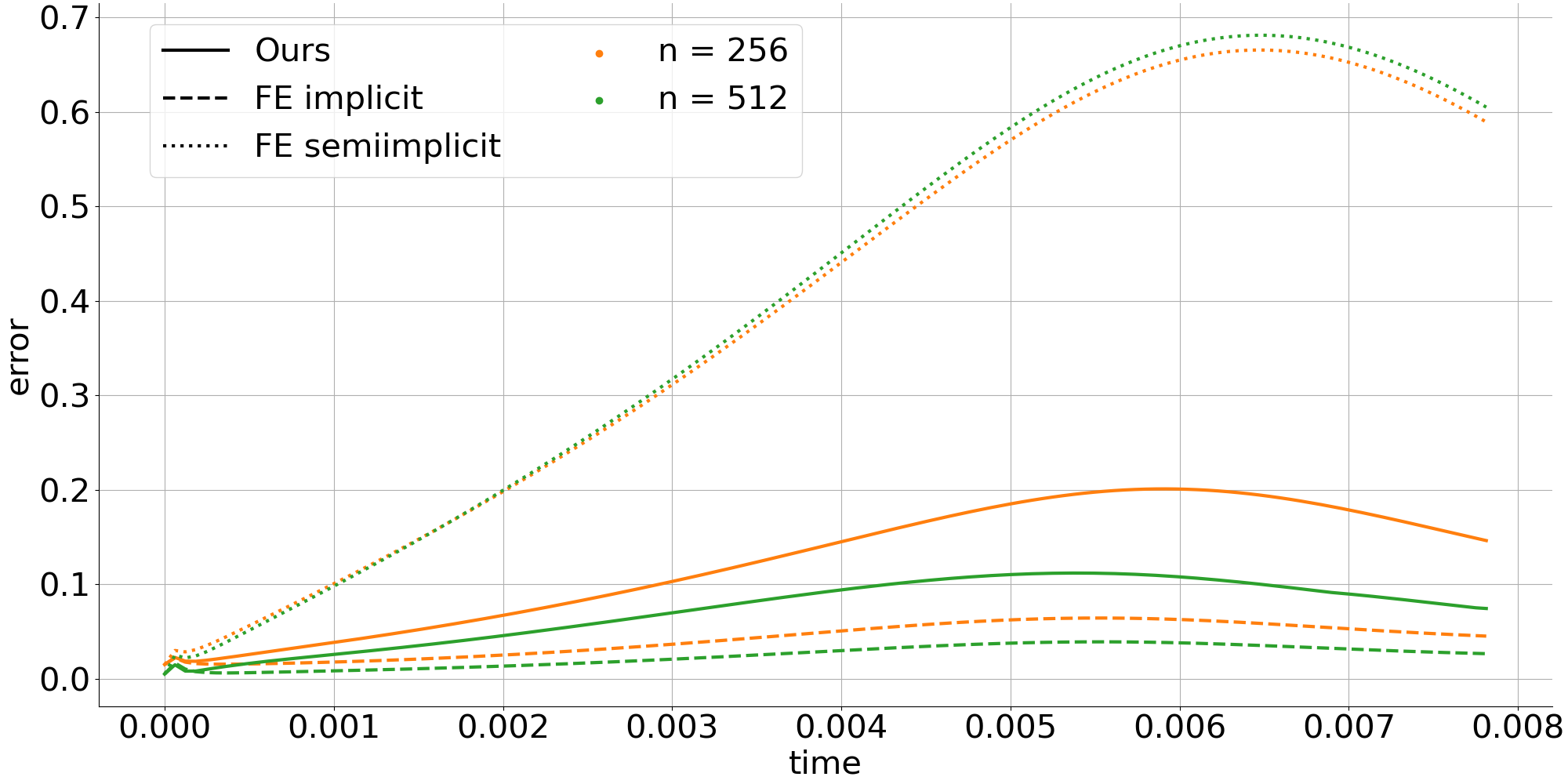}};
	\node[inner sep=0pt] at (4.24, -2.2) {\includegraphics[width=.34\textwidth, trim={3cm 2cm 3cm 2cm},clip]{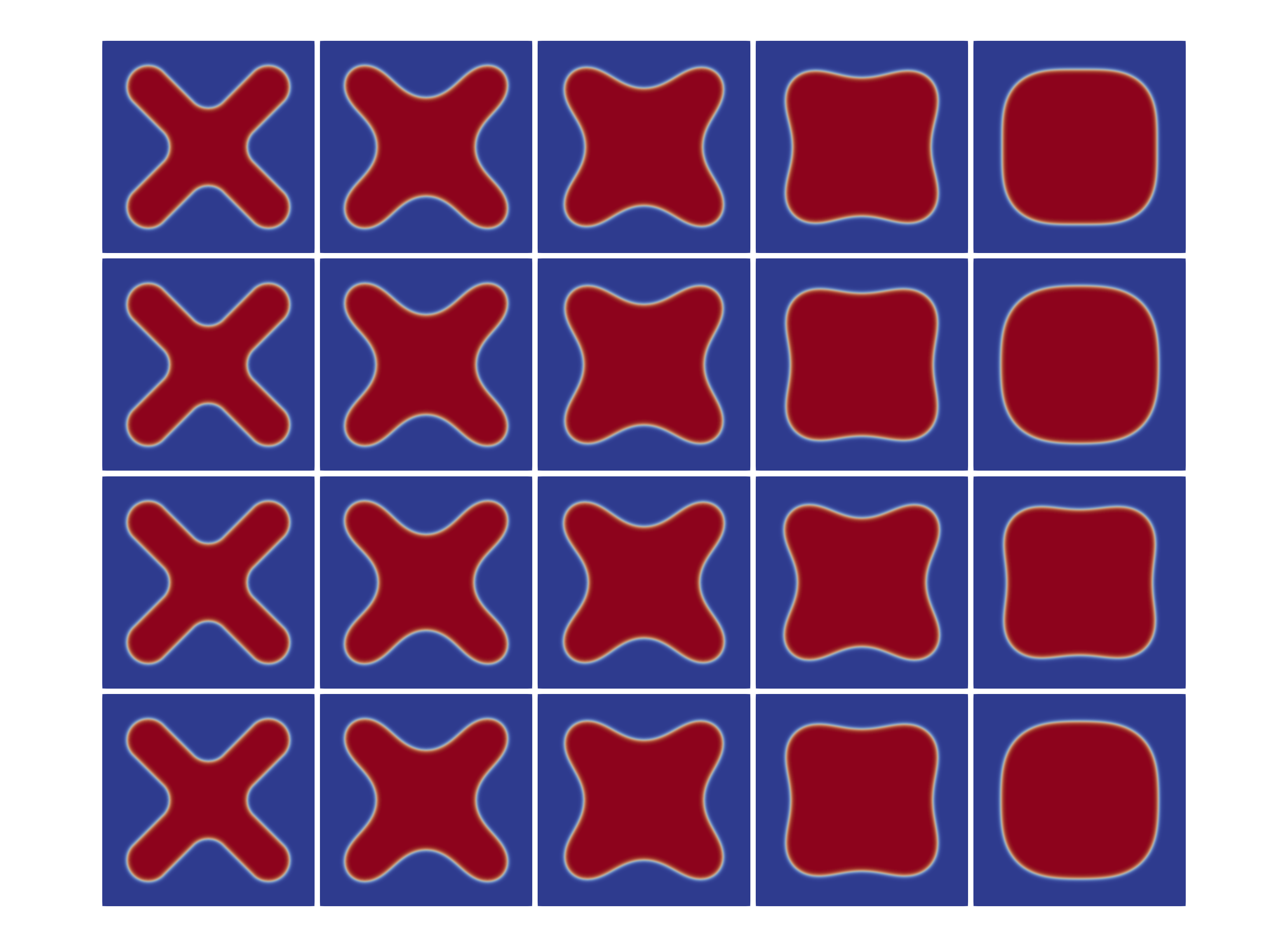}};
	\node[align=center] at (-3.74, 0.2) {Circles};
	\node[align=center] at (4.04, 0.2) {Rectangle};
	\node[align=center] at (0.4, -4.3) {Cross};
	
	\node[font=\tiny\linespread{0.9}\selectfont, align=center] at (1.25, -0.7) {Ours\\ $n=256$};
	\node[font=\tiny\linespread{0.9}\selectfont, align=center] at (1.25, -1.75) {FE\\ implicit\\ $n=256$};
	\node[font=\tiny\linespread{0.9}\selectfont, align=center] at (1.25, -2.7) {FE\\ semiimplicit\\ $n=256$};
	\node[font=\tiny\linespread{0.9}\selectfont, align=center] at (1.25, -3.6) {FE\\ implicit\\ $n=2048$};
	\node[font=\tiny\linespread{0.9}\selectfont, align=center] at (2.3, -4.23) {$t=0$};
	\node[font=\tiny\linespread{0.9}\selectfont, align=center] at (4.34, -4.21) {$t=2^{-8}$};
	\node[font=\tiny\linespread{0.9}\selectfont, align=center] at (6.14, -4.21) {$t=2^{-7}$};
	
	\end{tikzpicture}
	
	\caption{
		Convergence tests for Willmore flow with fixed \(\eps = 2^{-6}\) and \(\tilde \tau = 2^{-14}\) while increasing \(n\), with \(n_K = \tfrac{n}{8} + 1\). 
		As in \autoref{fig:ConvergenceTestMCF}, the line styles correspond to the different methods for approximating the mean curvature evolution and the colors to the varying resolution.
		In the top left, we plot the average $L^2$-error to the analytic solution for 30 circles with radii $r_i = 0.05\pi + \tfrac{0.15\pi i}{30}\,$, $i=0,\dots,29$ over time. 
		For comparison: for the averaged $L^2$ distance between the solution at time $0.004$ and at the initial time zero, one obtains$\tfrac{1}{30}\sum_{i=1}^{30} \|U_{R_W(r_i, 0.004)}- U_{R_W(r_i, 0)}\|_{L^2} \approx 0.414$. 
		In the top right, we plot the $L^2$-error of the evolution of a rectangle with sidelengths 0.4 and 0.2 compared to an implicit finite element solution with $n=2048$. In the bottom left, we plot the $L^2$-error of the evolution of a non-convex cross shape compared again to an implicit finite element solution with $n=2048$. In the bottom right, the computed evolution of the cross shape for the different schemes is displayed, for times $t=0$, $t=2^{-9}$, $t=2^{-8}$, $t=3\cdot 2^{-9}$, and $t=2^{-7}$.
	}
	\label{fig:ConvergenceTestWF}
	
\end{figure}

One observes that the neural network-based MCF scheme performs noticeably better than the semi-implicit finite element scheme. 
This confirms the observations made by Bretin \etal~\cite{BrDeMa22} for a single resolution. 
The fully implicit FE scheme outperforms the network-based scheme, which is not that surprising due to its semi-implicit nature.
Besides the error caused by the semi-implicit approach, further error sources are related to the neural operator itself, such as limitations of the network architecture or training data set.
In \autoref{fig:ConvergenceTestMCF} one observes an increase of the error in time for increasing spatial resolution, which appears to be caused by a dominant time discretization error for fixed time step size $\tilde \tau$, which increases 
with the increasing spatial resolution. Indeed, for $n=1024$ we observed a further small 
increase of the error, but the error appeared to saturate (e.g., at about $\approx 0.01353$ for $t\approx0.0039$).

To validate the convergence of the neural network-based MCF scheme on other shapes than circles, where it was trained on, we also investigated the convergence for increasing $n$ on a nonconvex cross shape on the domain $\Omega = (-1,1)^2$. 
Since an analytic solution of mean curvature flow for this shape is not at hand, we compared the solutions of the different schemes to a solution of the nested finite element scheme \cite{FrRuWi11}  with $n=2048$, $\tilde \tau = 2^{-16}$. 
In \autoref{fig:ConvergenceTestMCFcross} on the left, the evolution of the error in time is depicted for the different schemes and resolutions. 
Here, $\eps$ and $\tilde \tau$ are chosen as before. 
Again, we see that the neural network-based scheme outperforms the semi-implicit finite element scheme, and in this example, even the implicit finite element scheme. 
That the observed error does not start at zero is due to the interpolation error between the computational resolutions ($n=128,256,512$) and the reference resolution ($n=2048$). 
The evolution of the cross shape is displayed on the right  in \autoref{fig:ConvergenceTestMCFcross}. We observe that in the time interval considered here, mean curvature flow varies only moderately, whereas the variation is significant for Willmore flow on the identical time interval, see \autoref{fig:ConvergenceTestWF}. 

In summary, the network-based MCF scheme positions itself between the semi-implicit and the fully implicit finite element scheme in terms of accuracy.


\begin{table}[htbp!]
	\centering
	
	\begin{tabular}{l|r|r}
		\toprule
		\textbf{2d cross shape}       & $n=256$ & $n=512$   \\
		\midrule
		Ours  &         122 sec     &   1020 sec           \\
		FE implicit   &      4970 sec   &   28622 sec      \\
		FE semiimplicit   &       4307 sec   &   22045 sec        \\
		
		\bottomrule
	\end{tabular}
	
	\caption{
		Computing time for 128 time steps (until time $t=0.0078125$) of Willmore flow of the different schemes at different resolutions.
	}
	\label{tab:PerformanceCross}
	
\end{table}

In two dimensions, the evolution of a circle with initial radius $r$ under Willmore flow is given at time $\tau$ by a circle with radius $R_W(r,\tau) =  \sqrt[^4]{r^4 + 2\tau}$. 
In \autoref{fig:ConvergenceTestWF} in the top left, the error of our hybrid scheme based on \autoref{eq:discrete_willmore_flow_energy}, \autoref{eq:mcf_operator_discrete} is displayed, averaging over the same set of initial radii as in \autoref{fig:ConvergenceTestMCF}.
Again, the different line styles (solid, dashed, dotted) correspond to the different methods for solving the inner MCF problem (hybrid, finite element implicit and semi-implicit). 
As shown in \cite{FrRuWi11}, for $\eps=2h$ ($h=\tfrac{1}{n}$), the nested Willmore scheme suffers from numerical instabilities.
Hence, we only consider the error evolution for $n=256,\, 512$ and for fixed $\eps = 2^{-6}$. 

To test our scheme not only on circles, on which the networks were trained, we applied the hybrid scheme to initial data given as the phase field approximation of a rectangle sized $0.4\times 0.2$.
Since there is no known analytic solution for Willmore flow in this case, we compare the results to a solution of the nested finite element scheme by Franken \etal \cite{FrRuWi11} on a much finer resolution ($n=2048$). 
The corresponding error evolution is displayed in \autoref{fig:ConvergenceTestWF} on the top right.
In addition, we tested our scheme for the cross-shaped initial condition, whose MCF evolution is already examined in \autoref{fig:ConvergenceTestMCFcross}. Again, we compared to the solution of the nested scheme with $n=2048$, cf.~\autoref{fig:ConvergenceTestWF} on the bottom left. 

In all three experiments, we observe that our hybrid approach performs noticeably better than the nested scheme from \cite{FrRuWi11} using a semi-implicit finite element approach for the inner MCF problem, while the fully implicit nested finite element scheme remains the most accurate. This mirrors the observations for MCF in the circle experiment.

Regarding computing time, our hybrid scheme clearly outperforms both finite element schemes. \autoref{tab:PerformanceCross} displays the time required to compute the $128$ time steps for the evolution of the cross shape, with an online computing time of our hybrid scheme more than $20$ times faster. Here, the computing time of the offline training of the neural operator has not been taken into account. However, once the model is trained, our experiments indicate that the reduction in computing time appears to be independent of different choices of the initial data.

In \autoref{fig:ConvergenceTestWF} on the bottom right, we show the evolution of the cross shape under Willmore flow. Compared to the corresponding evolution under MCF, we observe a significantly faster shape variation in the case of Willmore flow, 
evolving it into an already convex shape at time $t = 2^{-7}$. 
It is clearly visible that the finite element scheme relying on a semi-implicit time discretization of the inner MCF problem leads to a poorer approximation of Willmore flow, while the results of our scheme are qualitatively indistinguishable from those computed with $n=2048$ already for $n=256$. 
In particular, for applications where one is primarily interested in the qualitative effects of Willmore flow, one can effectively achieve satisfying results 
already on coarser resolutions, as it will be exploited in the next section.

\paragraph{Remarks on the implementation and the data processing pipeline.}
We end this section with details on the implementation and data processing pipeline to facilitate reproducibility.
As for our hybrid scheme (see \autoref{sec:discretization}), we have implemented the finite element-based approaches in Python using standard libraries.
We implemented the nested scheme using Newton's method as described in \cite{FrRuWi11} with direct linear solver PARDISO \cite{ScGaFi01}.
All experiments using the finite element-based approaches were run on a workstation with two 32-core AMD EPYC 7601 processors with 1TB RAM. 
All experiments using our hybrid approach were run on a workstation with an NVIDIA A100 GPU with 40GB of memory and two 24-core AMD EPYC 7402 processors with 256GB RAM using double-precision floating-point arithmetic on the GPU.
The training of the neural MCF operator benefits from additional memory as it allows to cache more of the training data.
However, the training is possible with less memory at the moderate expense of speed. 
Running our scheme, once the MCF operator is trained, requires only a small fraction of the available GPU and CPU memory.
For the largest 3D experiment (cf.~\autoref{fig:armadillo}) $6.6$ GB GPU cache is used for $n=128$.
The phase fields for the armadillo and the rocker-arm were generated as follows:
First, using the tool $\texttt{mesh\_to\_voxels}$ from \cite{Kl21} signed distances were generated on a $n^3$ grid given a triangular mesh.
Then, the optimal phase field profile, as described in \autoref{sec:method}, was concatenated with the signed distances to obtain the initial phase field approximation.
To generate the images in \autoref{fig:3dInpainting} and in \autoref{fig:armadillo}, the zero-levelset of the phase field was extracted using the $\texttt{Contour}$ function from Paraview \cite{AhGeLa05}. 
Finally, all results in 3D were rendered in Blender \cite{Bl18}.

\begin{figure}[t]
	\centering
	\begin{tikzpicture}
	\node at (0,-1.4) {\includegraphics[width=\textwidth, trim={0 5cm 0cm 0cm},clip]{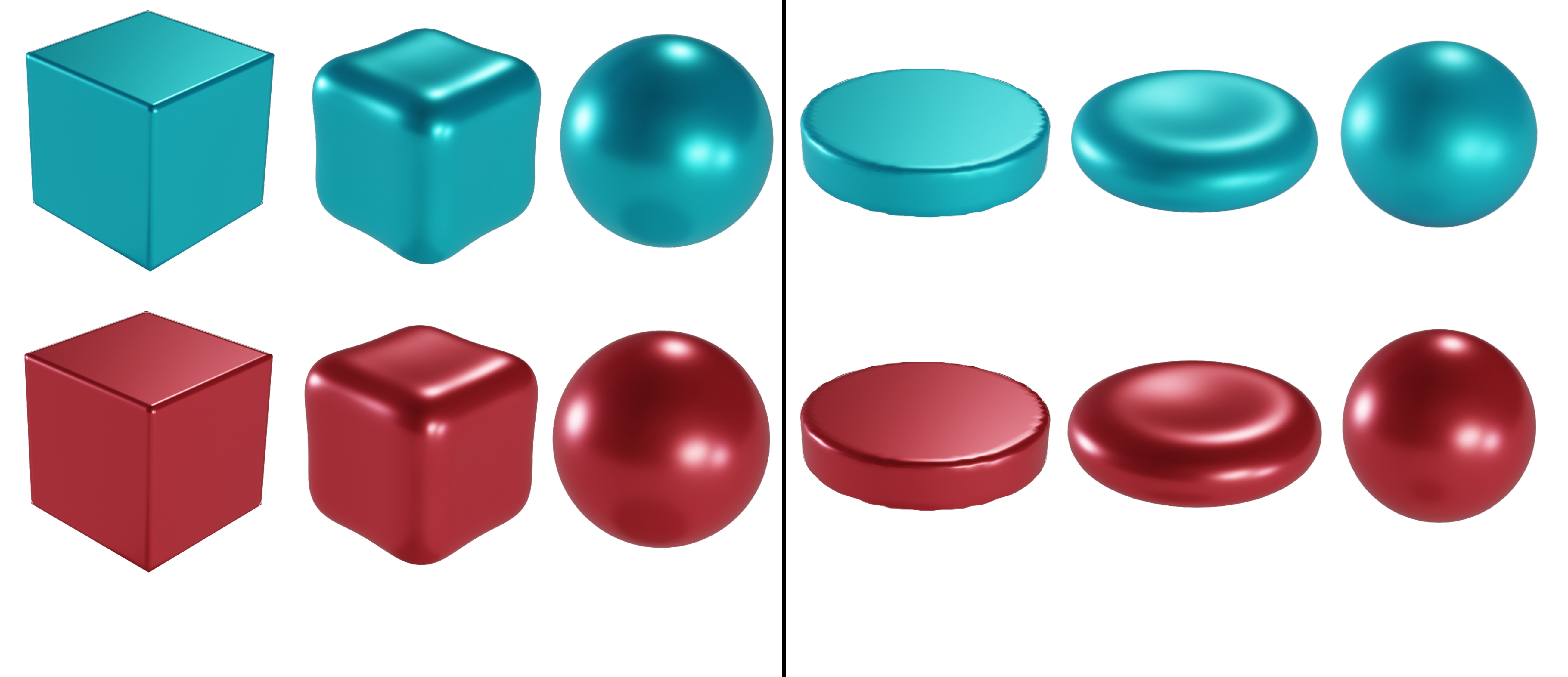}};
	\node[font=\small\linespread{0.9}\selectfont, align=center, rotate=90] at (-7.7,0.1) {Ours};
	\node[font=\small\linespread{0.9}\selectfont, align=center, rotate=90] at (-7.7,-2.9) {FE implicit};
	\node[font=\tiny\linespread{0.9}\selectfont, align=center] at (-6.4,1.5) {$w_{\tilde\tau}^\eps[u_{\scaleto{\text{NN}}{2pt}}^{\scaleto{0}{2pt}}] = 132.65$};
	\node[font=\tiny\linespread{0.9}\selectfont, align=center] at (-3.6,1.5) {$w_{\tilde\tau}^\eps[u_{\scaleto{\text{NN}}{2pt}}^{\scaleto{1}{2pt}}] = 52.26$};
	\node[font=\tiny\linespread{0.9}\selectfont, align=center] at (-1.1,1.5) {$w_{\tilde\tau}^\eps[u_{\scaleto{\text{NN}}{2pt}}^{\scaleto{100}{2pt}}] = 25.78$};
	\node[font=\tiny\linespread{0.9}\selectfont, align=center] at (6.5,1.5) {$w_{\tilde\tau}^\eps[u_{\scaleto{\text{NN}}{2pt}}^{\scaleto{100}{2pt}}] = 25.59$};
	\node[font=\tiny\linespread{0.9}\selectfont, align=center] at (4.,1.5) {$w_{\tilde\tau}^\eps[u_{\scaleto{\text{NN}}{2pt}}^{\scaleto{1}{2pt}}] = 50.49$};
	\node[font=\tiny\linespread{0.9}\selectfont, align=center] at (1.3,1.5) {$w_{\tilde\tau}^\eps[u_{\scaleto{\text{NN}}{2pt}}^{\scaleto{0}{2pt}}] = 87.15$};

	\node[font=\tiny\linespread{0.9}\selectfont, align=center] at (-3.7,-1.35) {$\|u_{\scaleto{\text{NN}}{2pt}}^{\scaleto{1}{2pt}} - u_{\scaleto{\text{FE}}{2pt}}^{\scaleto{1}{2pt}}\| = 7.66\mathrm{e}{-3}$};
	\node[font=\tiny\linespread{0.9}\selectfont, align=center] at (-1.3,-1.35) {$\|u_{\scaleto{\text{NN}}{2pt}}^{\scaleto{100}{2pt}} - u_{\scaleto{\text{FE}}{2pt}}^{\scaleto{100}{2pt}}\| = 7.69\mathrm{e}{-3}$};
	\node[font=\tiny\linespread{0.9}\selectfont, align=center] at (3.8,-1.35) {$\|u_{\scaleto{\text{NN}}{2pt}}^{\scaleto{1}{2pt}} - u_{\scaleto{\text{FE}}{2pt}}^{\scaleto{1}{2pt}}\| = 4.08\mathrm{e}{-3}$};
	\node[font=\tiny\linespread{0.9}\selectfont, align=center] at (6.3,-1.35) {$\|u_{\scaleto{\text{NN}}{2pt}}^{\scaleto{100}{2pt}} - u_{\scaleto{\text{FE}}{2pt}}^{\scaleto{100}{2pt}}\| = 27.01\mathrm{e}{-3}$};
	
	%
	
	\end{tikzpicture}
	
	\caption{
		Evolution by Willmore flow for a cube surface and a thick disk surface with spatial resolution $n = 64$, interface parameter $\eps=2^{-5}$, inner stepsize $\tilde{\tau} = 2^{-12}$, and Willmore flow stepsize $\tau = 2^{-18}$. Top row: Our hybrid scheme with kernel size $n_K =17$. Bottom row: nested implicit finite element scheme from \cite{FrRuWi11}.
		Results are shown at time \(0,1\), and time \(100\), which reproduce concave surface patches as common effects for Willmore. The approximate phase field Willmore energy $w_{\tilde \tau}^\eps[u] = \tfrac\eps{2\tilde\tau^2}\|v_{\tilde\tau}[u] - u\|^2$, and the $L^2$-error between the results of the two schemes are also displayed. For comparison: The exact Willmore energy of a sphere is $8\pi \approx 25.13\,$.
	}
	\label{fig:DiskAndCube}
\end{figure}

\section{Applications in image and geometry processing}
\label{sec:applications}
Now that we have introduced and studied our proposed scheme, we will briefly discuss its use for two closely related applications: surface fairing and surface restoration.

In the first of these applications, \emph{surface fairing}, the goal is to create a visually smooth and seamless surface starting from an input surface that is usually noisy or otherwise corrupted.
The problem has been studied in the computer graphics and vision communities and curvature flows have established themselves as a classical tool \cite{DeMeSc99}.
Willmore flow is particularly interesting as a basic model for this application \cite{BoSc05,GrAu20,SoChDi21} since it avoids singularities that could arise with mean curvature flow \cite{CrPiSc13} 
and preserves $C^1$ boundary conditions.
Certainly, other flows decreasing the Willmore energy, \eg an $H^2$-flow \cite{Sc17}, would also be suitable. 
In \autoref{fig:DiskAndCube}, we consider the Willmore flow of surfaces in $\R^3$ for different initial surfaces and compare to the 
scheme from \cite{FrRuWi11}. Additionally, we see that the approximate Willmore energy of the final shapes $$w_{\tilde \tau}^\eps[u] \coloneqq \tfrac\eps{2\tilde\tau^2}\|v_{\tilde\tau}[u] - u\|^2$$, with $v_{\tilde\tau}[u]$ given by \eqref{eq:implicitMCF}, discretized by finite elements, is close to $8\pi\approx 25.13$, the exact Willmore energy of a round sphere.
As the evolution under Willmore flow in 3D is quite fast, we choose a small outer step size $\tau$ to prevent the shape losing too many details too quickly.
It is in fact usually smaller than the inner MCF step size $\tilde \tau$. 
To show stability for large step sizes, we consider the evolution of the armadillo, often referenced in computer graphics, in \autoref{fig:armadillo}, were we start with the very small outer step size $\tau = 2^{-22}$, but double it after every time step, leading to a almost sphere-like shape in 16 time steps.

In \autoref{fig:rockerarm}, the Willmore flow of a rocker-arm is displayed, showing the evolution towards a Clifford torus.
In \cite{MaNe14}, it was shown that such a torus is a minimizer of the Willmore energy among immersed tori in $\R^3$ 
with Willmore energy $4\pi^2\approx 39.48$. 
Our final shape, after 20 time steps, has an approximate Willmore energy 
$w_{\tilde\tau}^\eps[u_{\scaleto{\text{NN}}{4pt}}^{\scaleto{20}{4pt}}] \approx 41.46$, close to $4\pi^2$. 
To visually compare the final shape to a Clifford torus, we fitted a phase field $u_{\scaleto{\text{Clifford}}{4pt}}$ representing a Clifford torus to the phase field of the final shape $u_{\scaleto{\text{NN}}{4pt}}^{\scaleto{20}{4pt}}$. 
A phase field representation of the Clifford torus is obtained by concatenating the optimal Modica--Mortola profile and a signed distance function of the 
torus. The free parameters for this fitting are the center coordinates and a SO(3) rotation, and the outer radius. 
Indeed, the final shape is close to a Clifford torus, with $\|u_{\scaleto{\text{NN}}{4pt}}^{\scaleto{20}{4pt}} - u_{\scaleto{\text{Clifford}}{4pt}}\|_{L^2} \approx 3.978\cdot10^{-2}\,$. The phase field representing a Clifford torus has an approximate Willmore energy 
$w_{\tilde\tau}^\eps[u_{\scaleto{\text{Clifford}}{4pt}}] \approx 41.11\,$.

Concerning computational cost, we show the results comparing our hybrid scheme to the original finite element-based nested Willmore approach by Franken \etal \cite{FrRuWi11} in \autoref{table:Performance}.

\begin{figure}[t]
	
	\includegraphics[width=\textwidth]{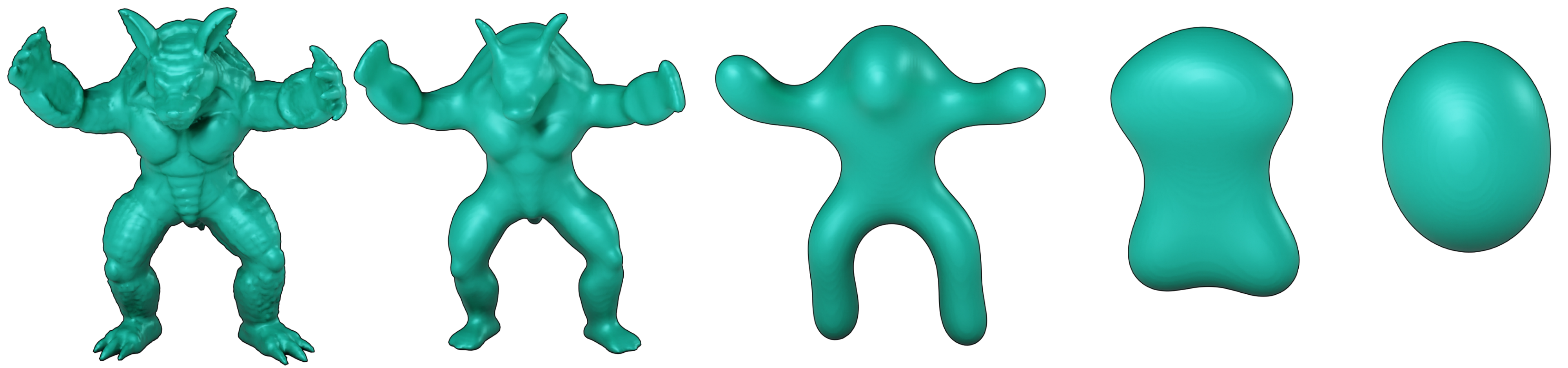}
	
	\caption{
		Evolution of the armadillo shape under Willmore flow for $n = 128$ and $n_K =17$, $\eps=2^{-6}$, and $\tilde{\tau} = 2^{-14}$. 
		The initial outer step size is $\tau = 2^{-22}$. After every time step, the outer step size is doubled.
		Results are shown at time steps \(0,1,8,11\), and \(16\), respectively. 
	}
	\label{fig:armadillo}
\end{figure}

\begin{figure}[t]
	\centering
	\begin{tikzpicture}
	\node at (0,0) {\includegraphics[width=\textwidth]{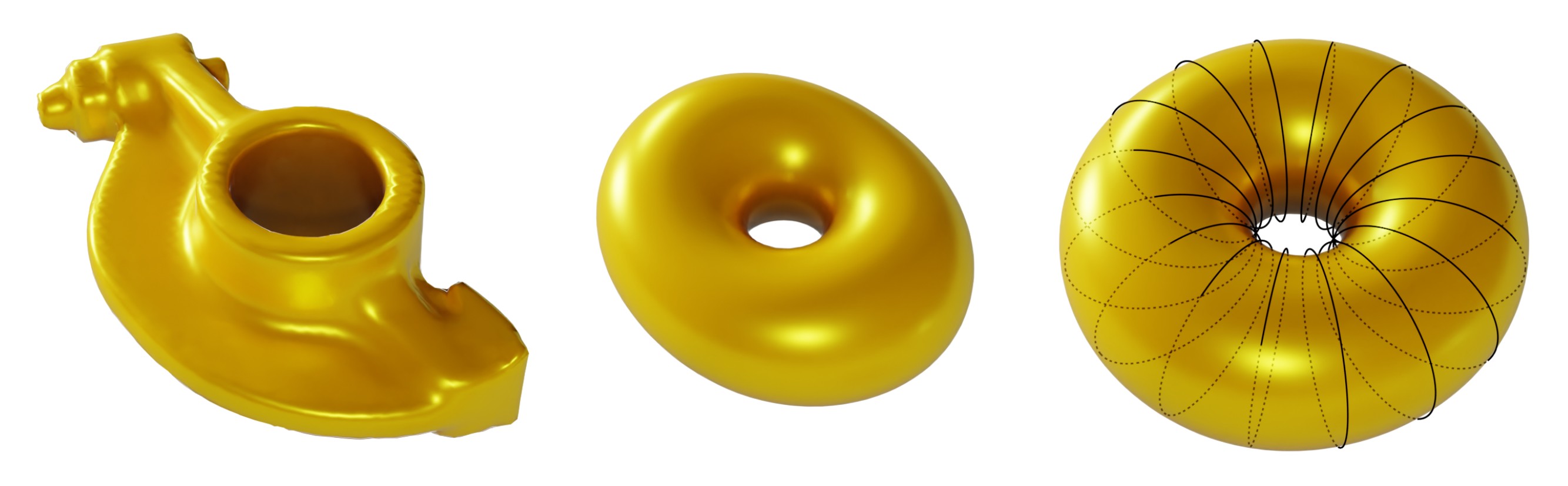}};
	\node[font=\tiny\linespread{0.9}\selectfont, align=center] at (-4.9,2.15) {$w_{\tilde\tau}^\eps[u_{\scaleto{\text{NN}}{2pt}}^{\scaleto{0}{2pt}}] = 276.12$};
	\node[font=\tiny\linespread{0.9}\selectfont, align=center] at (0,2.15) {$w_{\tilde\tau}^\eps[u_{\scaleto{\text{NN}}{2pt}}^{\scaleto{1}{2pt}}] = 42.08$};
	\node[font=\tiny\linespread{0.9}\selectfont, align=center] at (4.9,2.15) {$w_{\tilde\tau}^\eps[u_{\scaleto{\text{NN}}{2pt}}^{\scaleto{20}{2pt}}] = 41.46$};
	\node[font=\tiny\linespread{0.9}\selectfont, align=center] at (4.9,-2.25) {$\|u_{\scaleto{\text{NN}}{2pt}}^{\scaleto{20}{2pt}} - u_{\scaleto{\text{Clifford}}{2pt}}\| = 3.978\cdot10^{-2}$};
	\end{tikzpicture}
	
	\caption{
		Evolution of a rocker arm shape under Willmore flow.
		The spatial resolution is $n = 64$, the kernel size is $n_K =17$, 
		the interface parameter $\eps=2^{-5}$, the inner MCF step size $\tilde{\tau} = 2^{-12}$, and the Willmore flow step size $\tau = 2^{-12}$. 
		We show the discrete timesteps \(0,1\), and \(20\) leading to an approximation of the Clifford torus.
		For the last time step, we additionally draw minor circumcircles of the best-approximating Clifford torus. 		
	}
	\label{fig:rockerarm}
\end{figure}

\begin{table}[h!]
	\begin{subfigure}[c]{.49\linewidth}
		\begin{tabular}{l|r}
			\toprule
			\textbf{Method}       & \textbf{Armadillo ($n=64$)}  \\
			\midrule
			nested FEM \cite{FrRuWi11}   &         14589 sec                 \\
			Ours   &       807 sec               \\
			\bottomrule
		\end{tabular}
	\end{subfigure}
	\hfill
	\begin{subfigure}[c]{.49\linewidth}
		\includegraphics[width=\textwidth]{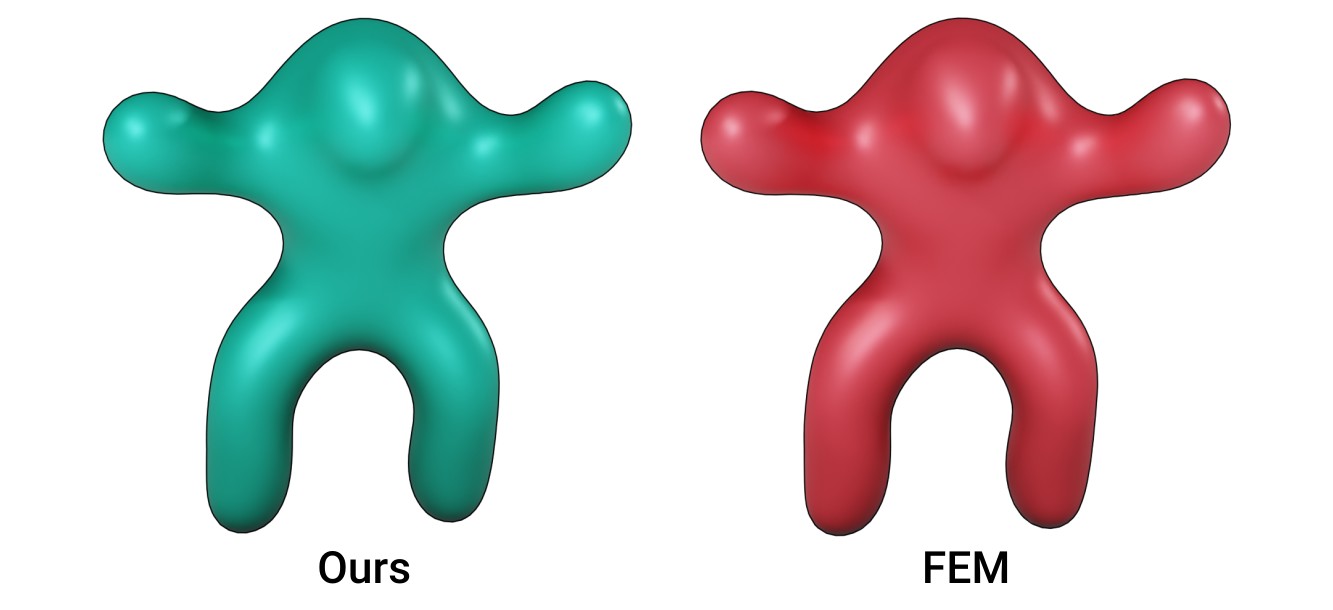}
	\end{subfigure}
	
	\caption{
		Performance comparison of our hybrid method to the nested finite element scheme by Franken \etal \cite{FrRuWi11} on the example from \autoref{fig:armadillo}:
		The results after one step of Willmore flow starting from the shape on the far-left of \autoref{fig:armadillo} (right), computing time (left). The parameters are $n=64$, $\eps=2^{-5}$, $\tau=2^{-18}$, and $\tilde\tau=2^{-12}\,$.
	}
	\label{table:Performance}
	
\end{table}

The second application we consider, \emph{surface inpainting}, is a fundamental topic in geometry and image processing, where one aims to restore corrupted or destroyed parts of an image or a surface.
We investigate in this section the use of our hybrid approach to tackle this problem.
In a first step, one replaces the corrupted or missing part by an ansatz geometry, whose primary purpose is to prescribe the desired topology.
Following the edge restoration approach by Nitzberg \etal \cite{NiMuSh93}, one considers the energy \eqref{eq:discrete_willmore_flow_energy} on the full domain \(\Omega\), but only takes into account degrees of freedom in a part \(D\subset\Omega\), where the image or surface 
is corrupted.  
Then, Willmore flow under this constraint leads to smooth reconstructions of the corrupted area while preserving $C^1$ boundary conditions on \(\partial D \cap \Omega\). 
We illustrated two examples in two dimensions in \autoref{fig:2dInpainting}, where the reconstruction region \(D\) is outlined in green. 
In \autoref{fig:3dInpainting}, we carried out similar experiments in three dimensions. 
A particularity of Willmore flow in two dimensions is that the reconstruction of the circle in \autoref{fig:2dInpainting} takes the enormous number of \(2000\) timesteps of size $2h$.
This is due to the competition of the convex and concave parts in the reconstruction area: positive curvature on the sides pushes the surface outwards, while the middle part with negative curvature tends to move inwards. 
Only because the curvature of the outer part dominates slightly, the surface moves slowly outwards. 
In contrast, the reconstruction of the ball in \autoref{fig:3dInpainting} does not have the same slow behavior as the circle, because the three-dimensional Willmore energy is scale-invariant.

\begin{figure}[htbp!]
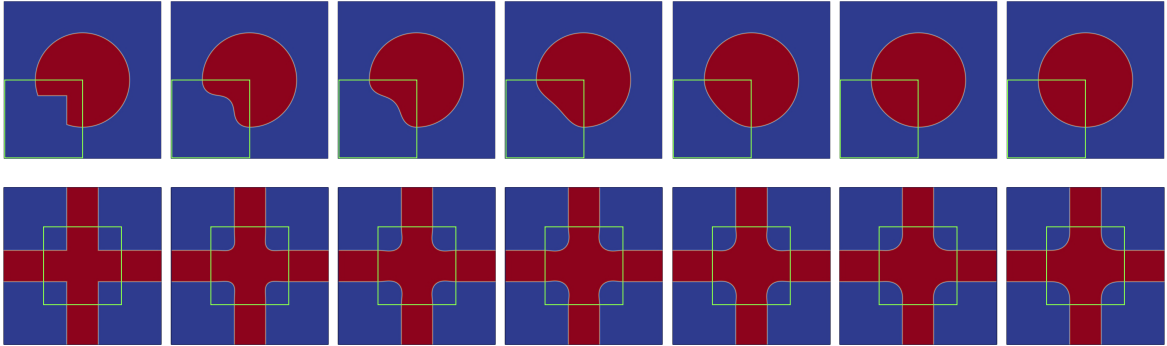

	\includegraphics[width=\textwidth]{images/circle_inpainting_withFrame_compressed}
	\includegraphics[width=\textwidth]{images/cross_inpainting_withFrame_compressed}
	\caption{
		Application of our hybrid scheme to image restoration in two dimensions.
		In both examples, the spatial resolution is $n = 1024$, the kernel size is $n_K =17$, and the interface parameter $\eps=2^{-8}$ and the reconstruction region \(D\) is outlined in green.
		In the top row, a disk with a cut-out corner is considered as the initial image and we use $\tilde{\tau} = 2^{-14}$, $\tau = 2^{-7}$.
		The restoration evolution is shown at times \(0, 1, 100, 500, 1000, 1500\), and \(2000\).
		In the bottom row, a cross shape is given as the initial image, where the central part is smoothed under discrete Willmore flow with stepsizes $\tilde{\tau} = 2^{-14}$ and $\tau = 2^{-14}$.
		We show the flow at times \(0, 1, 5, 10, 20, 35\), and \(50\).
	}
	\label{fig:2dInpainting}
\end{figure}

\begin{figure}[htbp!]
	\includegraphics[width=\textwidth]{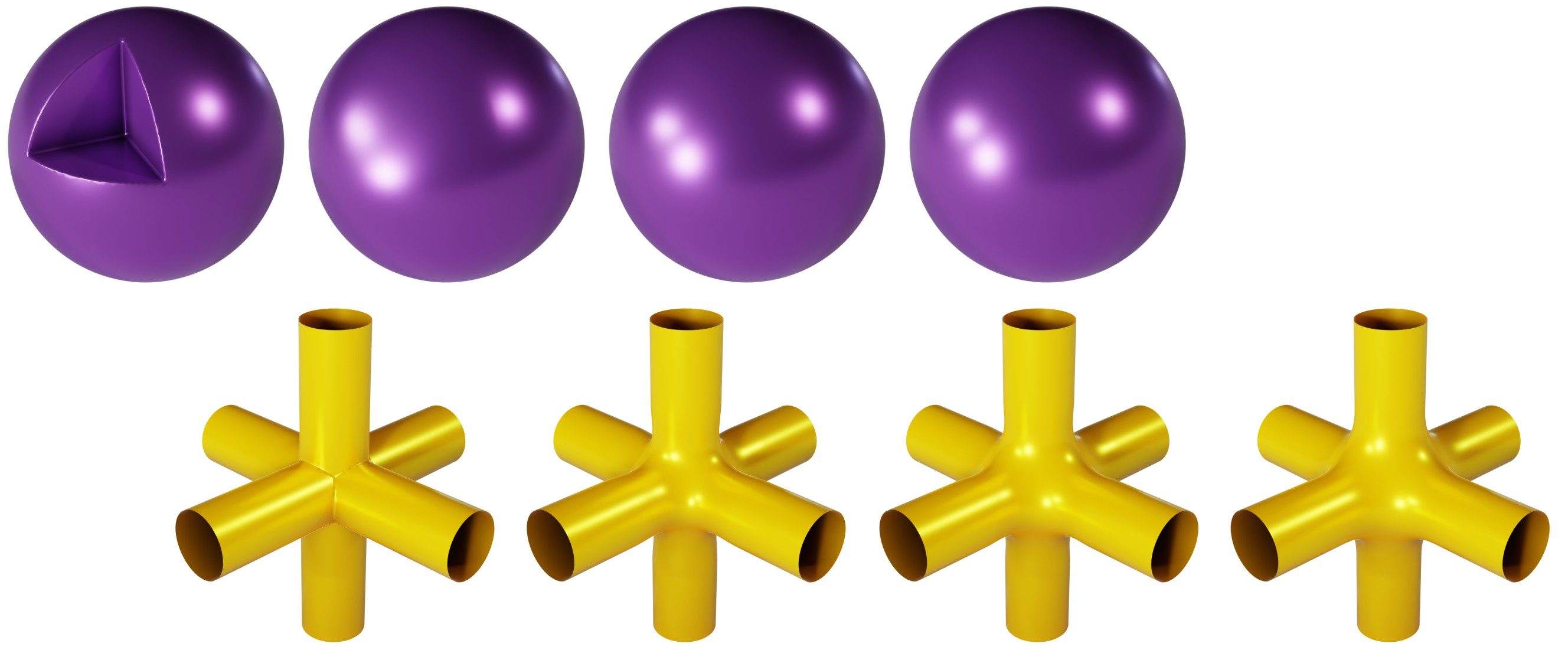}
	\caption{
		Application of our hybrid scheme to surface restoration in three dimensions. 
		In both examples, the spatial resolution was $n = 128$, the kernel size was $n_K =17$, the interface parameter was $\eps=2^{-6}$, and the timestep sizes were
		$\tilde{\tau} = 2^{-14}$ and $\tau = 2^{-21}$.
		In the top row, the flow at timesteps \(0, 1, 3\), and \(5\) recovering a sphere with an initially cut-out corner is shown.
		In the bottom row, the flow at timesteps \(0, 1, 4\), and \(8\) leads to a smooth blending of six tubes.
	}
	\label{fig:3dInpainting}
\end{figure}

\section{Conclusions}
\label{sec:conclusions}
In this paper, we presented a new hybrid scheme for Willmore flow in a phase field formulation, which combines a minimizing movement ansatz for the flow proposed by Franken \etal \cite{FrRuWi11} with a neural operator approach to compute an approximation of the mean curvature following Bretin \etal \cite{BrDeMa22}.
For a fixed timestep size and scale parameter of the phase field ansatz, the hybrid scheme shows error reduction for increasing spatial resolution and stencil size of the network kernel.
The new scheme comes with significantly reduced computing times. 
The resulting discrete Willmore flow properly reflects the qualitative behavior of the continuous flow and is, for instance, applicable to the restoration of 2D images and 3D surfaces. 

The results encourage the use of neural networks when simulating geometric flows. 
A future challenge would be to directly learn a solution operator for Willmore flow. 
In \cite{GrHe08}, Grzhibovskis and Heintz described a convolution thresholding scheme for Willmore flow. 
Hence, designing a neural network to directly learn Willmore flow does not seem to be out of reach. 
However, creating proper training data is more subtle. 
The striking observation in \cite{BrDeMa22} is that the evolution of spheres under mean curvature flow is sufficient for the approximation of mean curvature flow for a wide range of initial data.
For Willmore flow, surely a significantly richer set of training data is required.

\section*{Acknowledgments}
We thank Angelo Kitio for helping with the initial re-implementation of the network-based scheme for mean curvature flow by Bretin \etal \cite{BrDeMa22}.

\section*{Funding}
This work was supported by the Deutsche Forschungsgemeinschaft (DFG, German Research Foundation) via project 211504053 -- Collaborative Research Center 1060 and via Germany’s Excellence Strategy project 390685813 -- Hausdorff Center for Mathematics.
Furthermore, this project has received funding from the European Union’s Horizon 2020 research and innovation program under the Marie Skłodowska-Curie grant agreement No 101034255.

\bibliographystyle{siam}
\bibliography{RuSaSm24}

\end{document}